\theoremstyle{plain}
\newtheorem{theorem}{Theorem}
\newtheorem*{theorem*}{Theorem}
\newtheorem{prop}{Proposition}\numberwithin{prop}{section}
\newtheorem{lemma}{Lemma}\numberwithin{lemma}{section}
\numberwithin{cor}{section}
\newtheorem{question}{Question}\numberwithin{question}{section}
\newtheorem*{hypothesis*}{Hypothesis}
\theoremstyle{definition}
\newtheorem{defin}{Definition}\numberwithin{defin}{section}
\newtheorem{example}{Example}\numberwithin{example}{section}
\theoremstyle{remark}
\newtheorem{remark}{Remark}\numberwithin{remark}{section}
\newcommand{\Z}{\mathbb Z}
\newcommand{\Q}{\mathbb Q}
\newcommand{\R}{\mathbb R}
\newcommand{\A}{\mathbb A}
\renewcommand{\P}{\mathbb P}
\newcommand{\Gm}{\mathbb{G}_{\mathrm{m}}}
\newcommand{\mult}{^{\times}}
\newcommand{\dual}{^{\vee}}
\newcommand{\tensor}{\otimes}
\newcommand{\linedef}[1]{\emph{#1}}
\newcommand{\muu}{\bm{\mu}}
\DeclareMathOperator{\Br}{\mathrm{Br}}
\newcommand{\isom}{\cong}
\newcommand{\et}{\mathrm{\acute{e}t}}
\newcommand{\Het}{H_{\et}}
\DeclareMathOperator{\Pic}{\mathrm{Pic}}
\DeclareMathOperator{\Spec}{\mathrm{Spec}}
\newcommand{\id}{\mathrm{id}}
\DeclareMathOperator{\im}{\mathrm{im}}
\newcommand{\sheaf}[1]{\mathscr{#1}}
\newcommand{\LL}{\sheaf{L}}
\newcommand{\OO}{\sheaf{O}}
\newcommand{\EE}{\sheaf{E}}
\newcommand{\NN}{\sheaf{N}}
\newcommand{\VV}{\sheaf{V}}
\newcommand{\CliffAlg}{\mathscr{C}}
\DeclareMathOperator{\Tr}{\mathrm{Tr}}
\DeclareMathOperator{\Hom}{\mathrm{Hom}}
\newcommand{\mapto}[1]{\xrightarrow{#1}}
\newcommand{\vp}{\varphi}
\newcommand{\tot}{\bullet}
\newcommand{\ur}{\mathrm{ur}}
\newcommand{\unram}[1]{\mathcal{#1}}
\newcommand{\KMil}{K_{\text{M}}}
\newcommand{\KMilur}{K_{\text{M},\ur}}
\newcommand{\KMiltot}{\KMil^{\tot}}
\newcommand{\KMilurtot}{\KMilur^{\tot}}
\newcommand{\KKMil}{\unram{K}_{\text{M}}}
\newcommand{\KKMiltot}{\KKMil^{\tot}}
\newcommand{\Hur}{H_{\ur}}
\newcommand{\Iur}{I_{\ur}}
\newcommand{\qform}[1]{<\! #1 \!>}
\newcommand{\Pform}[1]{\ll\! #1 \!\gg}
\newcommand{\Brtwo}{{}_2\!\Br}
\newcommand{\Zar}{\text{Zar}}
\newcommand{\res}{\partial}
\newcommand{\Cat}[1]{\mathsf{#1}}
\newcommand{\Field}{\Cat{Field}}
\newcommand{\Ring}{\Cat{Ring}}
\newcommand{\Sch}{\Cat{Sch}}
\newcommand{\Sm}{\Cat{Sm}}
\newcommand{\Ab}{\Cat{Ab}}
\newcommand{\Abtot}{\Cat{Ab}^{\tot}}
\newcommand{\Local}{\Cat{Local}}
\newcommand{\cd}{\mathit{cd}}
\newcommand{\vcd}{\mathit{vcd}}
\newcommand{\total}{\mathrm{tot}}
\title[Remarks on the Milnor conjecture over schemes]
{Remarks on the Milnor conjecture over schemes}
\author{Asher Auel}
\address{%
Department of Mathematics \& CS \\ %
Emory University \\ %
400 Dowman Drive NE W401 \\ %
Atlanta, GA 30322}
\email{auel@mathcs.emory.edu}
\subjclass[2010]{Primary 11-02, 19-02; Secondary 11Exx, 19G12}
\keywords{}
\begin{document}

\begin{abstract}
% Merkurjev's theorem---that every 2-torsion Brauer class is represented
% by the Clifford algebra of a quadratic form---is in general false when
% the base is no longer a field.  Parimala, Scharlau, and Sridharan
% found smooth complete $p$-adic curves for which Merkurjev's theorem is
% equivalent to the existence of a rational theta characteristic.  We
% prove that on a smooth proper $\Qp$-curve $X$, replacing the Witt
% group by the total Grothendieck--Witt group of line bundle-valued
% quadratic forms, these obstructions vanish; every 2-torsion Brauer
% class on $X$ is represented by the Clifford invariant of some line
% bundle-valued quadratic form.  The key new ingrediant is the actual
% construction of the Clifford invariant of a line bundle-valued
% quadratic form, when the value line bundle is nontrivial.
The Milnor conjecture has been a driving force in the theory of
quadratic forms over fields, guiding the development of the theory of
cohomological invariants, ushering in the theory of motivic
cohomology, and touching on questions ranging from sums of squares to
the structure of absolute Galois groups.  Here, we survey some recent
work on generalizations of the Milnor conjecture to the context of
schemes (mostly smooth varieties over fields of characteristic $\neq
2$).  Surprisingly, a version of the Milnor conjecture fails to hold
for certain smooth complete $p$-adic curves with no rational theta
characteristic (this is the work of Parimala, Scharlau, and
Sridharan).  We explain how these examples fit into the larger context
of the unramified Milnor question, offer a new approach to the
question, and discuss new results in the case of curves over local
fields and surfaces over finite fields.
\end{abstract}

\maketitle

The first cases of the (as of yet unnamed) Milnor conjecture were
studied in Pfister's Habilitationsschrift
\cite{pfister:habilitationsschrift} in the mid 1960s.  As Pfister
\cite[p.\ 3]{pfister:historical_remarks} himself points out, ``[the
Milnor conjecture] stimulated research for quite some time.''  Indeed,
it can be seen as one of the driving forces in the theory of quadratic
forms since Milnor's original formulation \cite{milnor:conjecture} in
the early 1970s.

The classical cohomological invariants of quadratic forms (rank,
discriminant, and Clifford--Hasse--Witt invariant) have a deep
connection with the history and development of the subject.  In
particular, they are used in the classification (Hasse--Minkowski
local-global theorem) of quadratic forms over local and global fields.
The first ``higher invariant'' was described in Arason's thesis
\cite{arason:quadratic_forms_invariant},
\cite{arason:cohomological_invariants}.
%, and is also used in classification theorems.  
The celebrated results of Merkurjev \cite{merkurjev:degree_2} and
Merkurjev--Suslin \cite{merkurjev_suslin:norm3} settled special cases
of the Milnor conjecture in the early 1980s, and served as a starting
point for Voevodsky's development of the theory of motivic cohomology.
Other special cases were settled by Arason--Elman--Jacob
\cite{arason_elman_jacob:graded_Witt_I} and Jacob--Rost
\cite{jacob_rost:invariant_degree_4}.  Voevodsky's motivic cohomology
techniques \cite{voevodsky:Milnor_conjecture_I} ultimately led to a
complete solution of the Milnor conjecture, for which he was awarded
the 2002 Fields Medal.
% During this period of development, the
% use of intersection theory, $K$-cohomology, and Chow motivic
% decomposition became increasingly important, finally culminating in
% the recent book of Elman--Karpenko--Merkurjev \cite{EKM}.

% Classical invariants, Arason
% invariant, search for higher invariants.  
The consideration of quadratic forms over rings (more general than
fields) has its roots in the number theoretic study of lattices (i.e.\
quadratic forms over $\Z$) by Gauss as well as the algebraic study of
division algebras and hermitian forms (i.e.\ quadratic forms over
algebras with involution) by Albert.  A general framework for the
study of quadratic forms over rings was established by Bass
\cite{bass:algebraic_K-theory}, with the case of (semi)local rings
treated in depth by Baeza \cite{baeza:semilocal_rings}.  Bilinear
forms over Dedekind domains (i.e.\ unimodular lattices) were studied
in a number theoretic context by Fr\"ohlich
\cite{frohlich:unimodular}, while the consideration of quadratic forms
over algebraic curves (and their function fields) was initiated by
Geyer, Harder, Knebusch, Scharlau \cite{harder:Witt_group_curves},
\cite{geyer_harder_knebusch_scharlau},
\cite{knebusch_scharlau:reciprocity},
\cite{knebusch:curves_real_fields_II}.  The theory of quadratic (and
bilinear) forms over schemes was developed by Knebusch
\cite{knebusch:heidelberg}, \cite{knebusch}, and utilized by Arason
\cite{arason:der_wittring}, Dietel
\cite{dietel:Witt_ring_real_curves}, Parimala
\cite{parimala:Witt_groups_conics},
\cite{parimala:curves_local_fields}, Fern\'andez-Carmena
\cite{fernandez-carmena:Witt_group_surfaces}, Sujatha
\cite{sujatha:Witt_groups_real_surfaces},
\cite{parimala_sujatha:witt_group_hyperelliptic_curve},
Arason--Elman--Jacob
\cite{arason_elman_jacob:Witt_ring_elliptic_curve},
\cite{arason_elman_jacob:Witt_ring_elliptic_curve_local_field}, and
others.  A theory of symmetric bilinear forms in additive and abelian
categories was developed by Quebbemann--Scharlau--Schulte
\cite{quebbemann_scharlau_et_al}, \cite{quebbemann_scharlau_schulte}.
Further enrichment came eventually from the triangulated category
techniques of Balmer \cite{balmer:derived_witt_groups},
\cite{balmer:triangular_witt_groups_I},
\cite{balmer:triangular_witt_groups_II}, and Walter
\cite{walter:GW_groups_triangulated_categories}.  This article will
focus on progress in generalizing the Milnor conjecture to these
contexts.

These remarks grew out of a lecture at the RIMS-Camp-Style seminar
``Galois-theoretic Arithmetic Geometry'' held October 19-23, 2010, in
Kyoto, Japan.  The author would like to thank the organizers for their
wonderful hospitality during that time.  He would also like to thank
Stefan Gille, Moritz Kerz, R.\ Parimala, and V.\ Suresh for many
useful conversations.  The author acknowledges the generous support of
the Max Plank Institute for Mathematics in Bonn, Germany where this
article was written under excellent working conditions.  This author
is also partially supported by National Science Foundation grant MSPRF
DMS-0903039.

\subsubsection*{Conventions.} A graded abelian group or ring
$\prod_{n\geq 0} M^n$ will be denoted by $M^{\tot}$.  If $0 \subset
\dotsm \subset N^2 \subset N^1 \subset N^0 = M$ is a decreasing
filtration of a ring $M$ by ideals, denote by $N^{\tot}/N^{\tot+1} =
\prod_{n\geq 0}N^{n}/N^{n+1}$ the associated graded ring.  Denote by
${}_2 M$ the elements of order 2 in an abelian group $M$. All abelian
groups will be written additively.

\section{The Milnor conjecture over fields}
\label{sec:fields}

Let $F$ be a field of characteristic $\neq 2$.  The total Milnor
$K$-ring $\KMiltot(F) = T^{\tot}(F\mult)/\langle a \tensor (1-a)\, :
\, a\ \in F\mult\rangle$ was introduced in \cite{milnor:conjecture}.
The total Galois cohomology ring $H^{\tot}(F,\muu_2^{\tensor \tot}) =
\bigoplus_{n\geq 2} H^n(F,\muu_2^{\tensor n})$ is canonically
isomorphic, under our hypothesis on the characteristic of $F$, to the
total Galois cohomology ring $H^{\tot}(F,\Z/2\Z)$ with coefficients in
the trivial Galois module $\Z/2\Z$.  The Witt ring $W(F)$ of
nondegenerate quadratic forms modulo hyperbolic forms has an
decreasing filtration $0\subset\dotsm\subset I^1(F) \subset I^0(F)=
W(F)$ generated by powers of the \linedef{fundamental ideal} $I(F)$ of
even rank forms.  The Milnor conjecture relates these three objects:
Milnor $K$-theory, Galois cohomology, and quadratic forms.

The quotient map $\KMil^1(F)=F\mult \to F\mult/F\mult{}^2 \isom
H^1(X,\muu_2)$ induces a graded ring homomorphism $h^{\tot} :
\KMiltot(F)/2 \to H^{\tot}(F,\muu_2^{\tensor \tot})$ called the
\linedef{norm residue symbol} by Bass--Tate
\cite{bass_tate:Milnor_global_field}.  The \linedef{Pfister form} map
$\KMil^1(F) = F\mult \to I(F)$ given by $a \mapsto \Pform{a} =
\qform{1,-a}$ induces a group homomorphism $\KMil^1(F)/2 \to
I^1(F)/I^2(F)$ (see Scharlau \cite[2 Lemma 12.10]{scharlau:book}), which
extends to a surjective graded ring homomorphism $s^{\tot} :
\KMiltot(F)/2 \to I^{\tot}(F)/I^{\tot+1}(F)$, see Milnor
\cite[Thm.\ 4.1]{milnor:conjecture}.

\begin{theorem}[Milnor conjecture]
Let $F$ be a field of characteristic $\neq 2$.  There exists a graded
ring homomorphism $e^{\tot} : I^{\tot}(F)/I^{\tot+1}(F) \to
H^{\tot}(F,\muu_2^{\tensor \tot})$ called the \linedef{higher
invariants} of quadratic forms, which fits into the following diagram
$$
\xymatrix{
\KMiltot(F)/2 \ar[r]^{h^{\tot}} \ar[d]_{s^{\tot}}& H^{\tot}(F,\muu_2^{\tensor\tot}) \\
I^{\tot}(F)/I^{\tot+1}(F) \ar[ur]_{e^{\tot}} & }
$$
of isomorphisms of graded rings.
\end{theorem}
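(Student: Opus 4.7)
The plan is to reduce the theorem to two foundational isomorphism statements. Observe that if both $h^{\tot}$ and $s^{\tot}$ are isomorphisms of graded rings, then the map $e^{\tot} := h^{\tot} \circ (s^{\tot})\inv$ is automatically a graded ring isomorphism making the triangle commute. Since $s^{\tot}$ is already surjective by Milnor's theorem cited in the excerpt, the theorem is contained in two independent assertions: (a) $h^{\tot}$ is an isomorphism (the Milnor conjecture on Galois cohomology, a special case of Bloch--Kato at the prime $2$); and (b) $s^{\tot}$ is injective (the Milnor conjecture on quadratic forms).

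For (a), the plan is to follow Voevodsky. Induct on the degree $n$: the base $n=1$ is Kummer theory and $n=2$ is Merkurjev's theorem. In the inductive step, given a non-trivial symbol $\{a_1,\dotsc,a_n\} \in \KMiltot(F)/2$, one constructs an associated Rost norm variety $X$ which generically splits the symbol and whose motive, after base change, decomposes into controlled Tate twists of a Rost motive. Combining Voevodsky's comparison of motivic cohomology with \'etale cohomology in weights $\leq n$ (the Beilinson--Lichtenbaum statement, which can be organized to flow from the same induction) with a motivic Bockstein argument on the relevant motivic cohomology of $X$ then yields bijectivity of $h^n$. The main obstacle of the entire theorem lies here: this step rests on Rost's degree formulas, the construction and motivic analysis of norm varieties, and the bulk of Voevodsky's motivic homotopy machinery.

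Given (a), assertion (b) follows from the argument of Orlov, Vishik, and Voevodsky. A class in the kernel of $s^n$ is analyzed by passing to the function field of a suitable Pfister quadric, whose motivic (and hence Galois) cohomology has been explicitly computed; the isomorphism $h^{\tot}$ from (a) then forces the class to already vanish in $\KMiltot(F)/2$, giving injectivity of $s^n$. Assembling (a) and (b) produces the isomorphism $e^{\tot}$ of the diagram and completes the proof. The entire argument can be viewed as a two-step cascade: the motivic proof of the cohomological Milnor conjecture is the deep input, and the quadratic form statement is deduced from it via Pfister quadric motives, leaving the higher invariants $e^{\tot}$ as the tautological composition of the two resulting isomorphisms.
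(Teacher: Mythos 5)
Your proposal is correct and follows essentially the same route as the paper, which (being a survey) establishes the theorem by the same decomposition: the norm residue isomorphism $h^{\tot}$ due to Voevodsky, the injectivity of $s^{\tot}$ due to Orlov--Vishik--Voevodsky (building on Voevodsky, via Pfister quadrics), and $e^{\tot}$ obtained from these two. The only cosmetic difference is that the paper recounts $e^{\tot}$ as a separately constructed invariant (Arason, Jacob--Rost, etc.) whose existence and surjectivity were historically independent steps, whereas you define it tautologically as $h^{\tot}\circ(s^{\tot})\inv$, which indeed suffices for the literal statement.
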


Many excellent introductions to the Milnor conjecture and its proof
exist in the literature.  For example, see the surveys of Kahn
\cite{kahn:milnor_conjecture_article}, Friedlander--Rapoport--Suslin
\cite{friedlander_rapoport_suslin:fields_medal_2002}, Friedlander
\cite{friedlander:motivic_complexes}, Pfister \cite{pfister:survey},
and Morel \cite{morel:voevodsky}.
% This present text will focus on the generalizations of the Milnor
% conjecture from fields to schemes.

The conjecture breaks up naturally into three parts: the conjecture
for the norm residue symbol $h^{\tot}$, the conjecture for the Pfister
form map $s^{\tot}$, and the conjecture for the higher invariants
$e^{\tot}$.  Milnor \cite[Question 4.3, \S6]{milnor:conjecture}
originally made the conjecture for $h^{\tot}$ and $s^{\tot}$, which
was already known for finite, local, global, and real closed fields,
see \cite[Lemma 6.2]{milnor:conjecture}.  For general fields, the
conjecture for $h^1$ follows from Hilbert's theorem 90, and for $s^1$
and $e^1$ by elementary arguments.  The conjecture for $s^{2}$ is
easy, see Pfister \cite{pfister:habilitationsschrift}.  Merkurjev
\cite{merkurjev:degree_2} proved the conjecture for $h^{2}$ (hence for
$e^2$ as well), with alternate proofs given by Arason
\cite{arason:proof_Merkurjev_theorem}, Merkurjev
\cite{merkurjev:another_proof_norm_2}, and Wadsworth
\cite{wadsworth:proof_Merkurjev_theorem}.  The conjecture for $h^{3}$
was settled by Merkurjev--Suslin \cite{merkurjev_suslin:norm3} (and
independently by Rost \cite{rost:K_3}).  The conjecture for $e^{\tot}$
can be divided into two parts: to show the existence of maps $e^n :
I^n(F) \to H^n(X,\muu_2^{\tensor n})$ (which are \emph{a priori} only
defined on generators, the Pfister forms), and then to show they are
surjective.  The existence of $e^3$ was proved by Arason
\cite{arason:quadratic_forms_invariant},
\cite{arason:cohomological_invariants}.  The existence of $e^4$ was
proved by Jacob--Rost \cite{jacob_rost:invariant_degree_4} and
independently Szyjewski \cite{shyevski:fifth_invariant}.  Voevodsky
\cite{voevodsky:Milnor_conjecture_I} proved the conjecture for
$h^{\tot}$.  Orlov--Vishik--Voevodsky \cite{orlov_vishik_voevodsky}
proved the conjecture for $s^{\tot}$, with different proofs given by
Morel \cite{morel:proof_milnor} and Kahn--Sujatha
\cite{kahn_sujatha:motivic_cohomology_unramified_quadrics}.

% \begin{theorem}[Milnor's Conjecture for quadratic forms]
% Let $k$ be a field of characteristic $\neq 2$.  Let $W(F)$ be the Witt
% ring of symmetric bilinear forms over $F$ and $I^*(F)$ the filtration
% induced by the powers of the \linedef{fundamental} ideal of forms of
% even rank.  For every $n \geq 0$, there exists an ``invariant'' $e^n :
% I^n(k) \to H^n(k,\Z/2\Z)$ inducing an isomorphism
% $$
% \ol{e}_n : I^n(k)/I^{n+1}(k) \isomto H^n(k,\Z/2\Z)
% $$
% of abelian groups.
% \end{theorem}

% While this conjecture was not stated explicitly by Milnor, it can be
% broken into pieces, each of which can  Milnor \cite{milnor:conjecture} statement of Milnor conjectures,
% Go through history.

% Let $\KMil^*(F)$ be the total \linedef{Milnor $K$-theory}
% ring.  

\subsection{Classical invariants of quadratic forms}
\label{subsec:Classical_invariants_of_quadratic_forms}

The theory of quadratic forms over a general field has its genesis in
Witt's famous paper \cite{witt:quadratic}.  Because of the assumption
of characteristic $\neq 2$, we do not distinguish between quadratic
and symmetric bilinear forms.  The \linedef{orthogonal sum}
$(V,b)\perp(V',b')=(V\oplus V', b+b')$ and \linedef{tensor product}
$(V,b)\tensor(V',b') = (V\tensor V',b\tensor b')$ give a semiring
structure on the set of isometry classes of symmetric bilinear forms
over $F$.  The \linedef{hyperbolic plane} is the symmetric bilinear
form $(H,h)$, where $H=F^2$ and $h((x,y),(x',y')) = xy'+x'y$.  The
\linedef{Witt ring} of symmetric bilinear forms is the quotient of the
Grothendieck ring of nondegenerate symmetric bilinear forms over $F$
with respect to $\perp$ and $\tensor$, modulo the ideal generated by
the hyperbolic plane, see Scharlau \cite[Ch.\ 2]{scharlau:book}.

The \linedef{rank} of a bilinear form $(V,b)$ is the $F$-vector space
dimension of $V$.  Since the hyperbolic plane has rank 2, the rank
modulo 2 is a well defined invariant of an element of the Witt ring,
and gives rise to a surjective ring homomorphism
$$
e^0 : W(F) = I^0(F) \to \Z/2\Z = H^0(F,\Z/2\Z)
$$
whose kernel is the \linedef{fundamental ideal} $I(F)$.

The \linedef{signed discriminant} of a non-degenerate bilinear form
$(V,b)$ is defined as follows.  Choosing an $F$-vector space basis
$v_1,\dotsc,v_r$ of $V$, we consider the Gram matrix $M_b$ of $b$,
i.e.\ the matrix given by $M_b = (b(v_i,v_j))$. %_{0 \leq i,j \leq r}$.
Then $b$ is given by the formula $b(v,w) = v^t M_b w$, where $v,w \in
F^r \isom V$.  The Gram matrix of $b$, with respect to a different
basis for $V$ with change of basis matrix $T$, is $T^tM_bT$.  Thus
$\det M_b \in F\mult$, which depends on the choice of basis, is only
well-defined up to squares.  For $a \in F\mult$, denote by $(a)$ its
class in the abelian group $F\mult/F\mult{}^2$.  The signed
discriminant of $(V,b)$ is defined as $(-1)^{r(r-1)/2} \det M_b \in
F\mult/F\mult{}^2$.  Introducing the sign into the signed discriminant
ensures its vanishing on the ideal of hyperbolic forms, hence it
descents to the Witt group.  While the signed discriminant is not
additive on $W(F)$, its restriction to $I(F)$ gives rise to a group
homomorphism
$$
e^1 : I(F) \to F\mult/F\mult{}^2 \isom H^1(F,\muu_2)
$$
which is easily seen to be surjective.  It's then not difficult to
check that its kernel coincides with the square $I^2(F)$ of the
fundamental ideal.  See Scharlau \cite[\S2.2]{scharlau:book} for more
details.

The \linedef{Clifford invariant} of a non-degenerate symmetric
bilinear form $(V,b)$ is defined in terms of its Clifford algebra.
The \linedef{Clifford algebra} $C(V,b)$ of $(V,b)$ is the quotient of
the tensor algebra $T(V) = \bigoplus_{r \geq 0} V^{\tensor r}$ by the
two-sided ideal generated by $\{v\tensor w + w\tensor v - b(v,w) \, :
\, v,w \in V\}$.  If $(V,b)$ has rank $r$, then $C(V,b) = C_0(V,b)
\oplus C_1(V,b)$ inherits the structure of a $\Z/2\Z$-graded
semisimple $F$-algebra of $F$-dimension $2^r$, see Scharlau
\cite[\S9.2]{scharlau:book}.  By the structure theory of the Clifford
algebra, $C(V,b)$ or $C_0(V,b)$ is a central simple $F$-algebra
depending on whether $(V,b)$ has even or odd rank, respectively.  The
\linedef{Clifford invariant} $c(V,b) \in \Br(F)$ is then the Brauer
class of $C(V,b)$ or $C_0(V,b)$, respectively.  Since the Clifford
algebra and its even subalgebra carry canonical involutions of the
first kind, their respective classes in the Brauer group are of order
2, see Knus \cite[\S\!\,IV.7.8]{knus:quadratic_hermitian_forms}.
While the Clifford invariant is not additive on $W(F)$, its
restriction to $I^2(F)$ gives rise to a group homomorphism
$$
e^2 : I^2(F) \to \Brtwo(F) \isom H^2(F,\muu_2) \isom
H^2(F,\muu_2^{\tensor 2}),
$$
see Knus \cite[IV Prop.\ 8.1.1]{knus:quadratic_hermitian_forms}.

Any symmetric bilinear form $(V,b)$ over a field of characteristic
$\neq 2$ can be diagonalized, i.e.\ a basis can be chosen for $V$ so
that the Gram matrix $M_b$ is diagonal.  For $a_1,\dots,a_r \in
F\mult$, we write $\qform{a_1,\dotsc,a_r}$ for the standard symmetric
bilinear form with associated diagonal Gram matrix.  For $a,b \in
F\mult$, denote by $(a,b)_F$ the (quaternion) $F$-algebra generated by
symbols $x$ and $y$ subject to the relations $x^2=a$, $y^2=b$, and
$xy=-yx$.  For example, $(-1,-1)_{\R}$ is Hamilton's ring of
quaternions.  Then the discriminant and Clifford invariant can be
conveniently calculated in terms of a diagonalization.  For $(V,b)
\isom \qform{a_1,\dotsc,a_r}$, we have
$$
d_{\pm}(V,b) = ((-1)^{r(r-1)/2}a_1\dotsm a_r) \in
F\mult/F\mult{}^2
$$ 
and
\begin{equation}
\label{eq:c_w}
c(V,b) = \alpha(r)(-1,a_1\dotsm a_r)_F + \beta(r)(-1,-1)_F + \sum_{i<j}(a_j, a_j)_F \in \Brtwo(F)
\end{equation}
where
$$
\alpha(r) =  \frac{(r-1)(r-2)}{2}, \qquad
\beta(r) =   \frac{(r+1)r(r-1)(r-2)}{24},
$$
see Lam \cite{lam:algebraic_theory_quadratic_forms}, Scharlau \cite[II.12.7]{scharlau:book} or Esnault--Kahn--Levine--Viehweg \cite[\S1]{esnault_kahn_levine_viehweg:arason}.

\section{Globalization of cohomology theories}
\label{sec:schemes}

Generalizations (what we will call \linedef{globalizations}) of the
Milnor conjecture to the context of rings and schemes have emerged
from many sources, see Parimala \cite{parimala:affine_three_folds},
%Guin \cite{guin:homologie_GL_Milnor_K-theory},
Colliot-Th\'el\`ene--Parimala
\cite{colliot-thelene_parimala:real_components}, Parimala--Sridharan
\cite{parimala_sridharan:graded_Witt}, Monnier
\cite{monnier:unramified}, Pardon \cite{pardon:filtered},
Elbaz-Vincent--M\"uller-Stach \cite{elbaz-vincent_muller-stach}, Gille
\cite{gille:graded_Gersten-Witt}, and Kerz
\cite{kerz:Gersten_conjecture_Milnor_K-theory}.  To begin with, one
must ask for appropriate globalizations of the objects in the
conjecture: Milnor $K$-theory, Galois cohomology theory, and the Witt
group with its fundamental filtration.  While there are many possible
choices of such globalizations, we will focus on two types: global and
unramified.

\subsection{Global globalization}
\label{subsec:Global_theories}

Let $F$ be a field of characteristic $\neq 2$.  
% For simplicity of
% exposition, we also assume that $F$ is perfect.  
Let $\Field_F$ (resp.\ $\Ring_F$) be the category of fields (resp.\
commutative unital rings) with an $F$-algebra structure together with
$F$-algebra homomorphisms.  Let $\Sch_F$ be the category of
$F$-schemes, and $\Sm_F$ the category of smooth $F$-schemes.  We will
denote, by the same names, the associated (large) Zariski sites.  Let
$\Ab$ (resp.\ $\Abtot$) be the category of abelian groups (resp.\
graded abelian groups), we will always consider $\Ab$ as embedded in
$\Abtot$ in degree 0.

Let $M^{\tot} : \Field_F \to \Abtot$ be a functor.  A
\linedef{globalization} of $M^{\tot}$ to rings (resp.\ schemes) is a
functor $\tilde{M}^{\tot} : \Ring_F \to \Abtot$ (resp.\ contravariant
functor $\tilde{M}^{\tot} : \Sch_F \to \Abtot$) extending $M^{\tot}$.
If $\tilde{M}^{\tot}$ is a globalization of $M^{\tot}$ to rings, then
we can define a globalization to schemes by taking the sheaf
$\unram{M}^{\tot}$ associated to the presheaf $U \mapsto
\tilde{M}^{\tot}(\Gamma(U,\OO_U))$ on $\Sch_F$ (always considered with
the Zariski topology).

\subsubsection*{``Na\"ive'' Milnor $K$-theory.}

For a commutative unital ring $R$, mimicking Milnor's tensorial
construction (with the additional relation that $a \tensor (-a) =0$,
which is automatic for fields) yields a graded ring $\KMiltot(R)$,
which should be referred to as ``na\"ive'' Milnor $K$-theory.
This already appears in Guin
\cite[\S3]{guin:homologie_GL_Milnor_K-theory} and later studied by
Elbaz-Vincent--M\"uller-Stach \cite{elbaz-vincent_muller-stach}.
Na\"ive Milnor $K$-theory has some bad properties when $R$ has small
finite residue fields, see Kerz
\cite{kerz:Milnor_K-theory_local_rings} who also provides a improved
Milnor $K$-theory repairing these defects.  
% Let $\KKMiltot$ be the associated globalization to schemes.
% Zariski sheaf associated to the
% presheaf of graded rings $U \mapsto \KMiltot(\Gamma(U,\OO_X))$ (or use
% the improved Milnor $K$-theory).  
% Then ``na\"ive'' Milnor $K$-theory of a scheme $X$ is defined as the
% graded ring of sections $\KMiltot(X) = \Gamma(X,\KKMiltot)$.  
% There is no truly global globalization of Milnor $K$-theory to general
% schemes.  Indeed, 
Thomason \cite{thomason:nonexistence} has shown that
there exists no globalization of Milnor $K$-theory to smooth schemes
which satisfies $\A^1$-homotopy invariance and has a functorial
homomorphism to algebraic $K$-theory.

\subsubsection*{\'Etale cohomology.}

\'Etale cohomology provides a natural globalization of Galois
cohomology to schemes.  We will thus consider the functor
$X \mapsto \Het^{\tot}(X,\muu_2^{\tensor\tot})$ on $\Sch_F$.

\subsubsection*{Global Witt group.}

For a scheme $X$, the \linedef{global} Witt group $W(X)$ of regular
symmetric bilinear forms introduced by Knebusch \cite{knebusch}
provides a natural globalization of the Witt group to schemes.  Other
possible globalizations are obtained from the Witt groups of
triangulated category with duality introduced by Balmer
\cite{balmer:derived_witt_groups},
\cite{balmer:triangular_witt_groups_I},
\cite{balmer:triangular_witt_groups_II}, \cite{balmer:handbook}.
These include: the \linedef{derived} Witt group of the bounded derived
category of coherent locally free $\OO_X$-modules; the
\linedef{coherent} Witt group of the bounded derived category of
quasicoherent $\OO_X$-modules with coherent cohomology (assuming $X$
has a dualizing complex, see Gille \cite[\S2.5]{gille:support},
\cite[\S2]{gille:graded_Gersten-Witt}); the \linedef{perfect} Witt
group of the derived category of perfect complexes of $\OO_X$-modules.
The global and derived Witt groups are canonically isomorphic by
Balmer \cite[Thm.\ 4.3]{balmer:triangular_witt_groups_II}.  All of the
above Witt groups are isomorphic (though not necessarily canonically)
if $X$ is assumed to be regular.

\subsubsection*{Fundamental filtration and the classical invariants.}

Globalizations of the classical invariants of quadratic forms are
defined as follows.  Let $(\EE,q)$ be a regular symmetric bilinear
form of rank $n$ on $X$.

The rank (modulo 2) of $\EE$ gives rise to a functorial
homomorphism
$$
e^0 : W(X) \to \Hom_{\text{cont}}(X,\Z/2\Z) = \Het^0(X,\Z/2\Z),
$$
whose kernel $I^1(X)$ is called the \linedef{fundamental ideal} of
$W(X)$.

The \linedef{signed discriminant form}
$(\det\EE,(-1)^{n(n-1)/2}\det q)$ gives rise to a functorial
homomorphism
$$
e^1 : I^1(X) \to \Het^1(X,\muu_2)
$$
see Knus \cite[III \S4.2]{knus:quadratic_hermitian_forms} and Milne
\cite[III \S4]{milne:etale_cohomology}.  Alternatively, the center of
the (even) Clifford $\OO_X$-algebra of $(\EE,q)$ defines a class in
$\Het^1(X,\Z/2\Z)$ called the \linedef{Arf invariant}, which coincides
with the signed discriminant under the canonical morphism
$\Het^1(X,\Z/2\Z) \to \Het^1(X,\muu_2)$ (see Knus \cite[IV Prop.\
4.6.3]{knus:quadratic_hermitian_forms} or Parimala--Srinivas
\cite[\S2.2]{parimala_srinivas:brauer_group_involution}).  Denote the
kernel of $e^1$ by $I^2(X)$, which is an ideal of $W(X)$.  Note that
$I^2(X)$ may not be the square of the ideal $I^1(X)$.
%, see Remark \ref{} % \ref{eq:warning}. 

The Clifford $\OO_X$-algebra
$\CliffAlg(\EE,q)$ gives rise to a functorial homomorphism
$$
e^2 : I^2(X) \to \Brtwo(X)
$$
called the \linedef{Clifford invariant}, see Knus--Ojanguren
\cite{knus_ojanguren:metabolic} and Parimala--Srinivas
\cite[\S2]{parimala_srinivas:brauer_group_involution}.  Denote the
kernel of $e^2$ by $I^3(X)$, which is an ideal of $W(X)$.

As Parimala--Srinivas \cite[p.\
223]{parimala_srinivas:brauer_group_involution} point out, there is no
functorial map $I^2(X) \to \Het^2(X,\muu_2)$ lifting the Clifford
invariant.  Instead, we can work with Grothendieck--Witt groups.  The
rank (modulo 2) gives rise to a functorial homomorphism
$$
ge^0 : GW(X) \to \Het^0(X,\Z/2\Z)
$$ 
with kernel denoted by $GI(X)$.  The signed discriminant gives rise to a
functorial homomorphism 
$$
ge^1: GI^1(X) \to \Het^1(X,\muu_2)
$$ 
with kernel denoted by $GI^2(X)$.  The class of the Clifford
$\OO_X$-algebra, together with it's canonical involution (via the
``involutive'' Brauer group construction of Parimala--Srinivas
\cite[\S2]{parimala_srinivas:brauer_group_involution}), gives rise to
a functorial homomorphism
$$
ge^2 : GI^2(X) \to \Het^2(X,\muu_2)
$$
also see
Knus--Parimala--Sridharan
\cite{knus_parimala_sridharan:compositions_triality}.  Denote the
kernel of $ge^2$ by $GI^3(X)$, which is an ideal of $GW(X)$.  

\begin{lemma}
\label{lem:GW}
Let $X$ be a scheme satisfying $\Brtwo(X) \isom {}_2\Het^2(X,\Gm)$. 
% regular quasiprojective scheme over a field $F$ of
% characteristic $\neq 2$.  
Then under the quotient map $GW(X) \to
W(X)$, the image of the ideal $GI^n(X)$ is precisely the ideal
$I^n(X)$, for $n \leq 3$.
\end{lemma}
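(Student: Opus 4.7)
The plan is to show both inclusions $\pi(GI^n(X)) \subseteq I^n(X)$ and $I^n(X) \subseteq \pi(GI^n(X))$, where $\pi: GW(X) \twoheadrightarrow W(X)$ is the canonical surjection, by induction on $n = 1, 2, 3$. The cornerstone is a compatibility package between the Grothendieck--Witt invariants $ge^i$ and the Witt invariants $e^i$. First I would verify that a hyperbolic form $H(P)$ has even rank, and that the canonical isomorphism $\det(P \oplus P\dual) \isom \OO_X$ yields trivial signed discriminant, so $ge^0$ and $ge^1$ descend through $\pi$ to the maps $e^0$ and $e^1$ on $W(X)$ and $I^1(X)$. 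For the Clifford invariant the situation is subtler: while $H(P)$ has Brauer-trivial Clifford algebra, so $e^2$ is well-defined on $I^2(X)$ with target $\Brtwo(X)$, the invariant $ge^2(H(L))$ for a line bundle $L$ need not vanish in $\Het^2(X,\muu_2)$. Using the hypothesis $\Brtwo(X) \isom {}_2\Het^2(X, \Gm)$ together with the Kummer exact sequence
$$
0 \to \Pic(X)/2 \to \Het^2(X,\muu_2) \to {}_2\Het^2(X, \Gm) \to 0,
$$
one obtains a commutative square relating $ge^2$ on $GI^2(X)$ to $e^2$ on $I^2(X)$, where the right vertical edge is the above quotient.

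Once this compatibility is in place, the inclusion $\pi(GI^n(X)) \subseteq I^n(X)$ for $n \leq 3$ is immediate: an element $\tilde{w} \in GI^n(X)$ has $ge^i(\tilde{w}) = 0$ for $i < n$, hence $e^i(\pi(\tilde{w})) = 0$ for $i < n$. For the reverse inclusion the cases $n = 1$ and $n = 2$ are direct. Given $w \in I^1(X)$, any lift $\tilde{w} \in GW(X)$ already has even rank and therefore lies in $GI^1(X)$. Given $w \in I^2(X)$, lift it to some $\tilde{w} \in GI^1(X)$ by the previous step; the identity $ge^1(\tilde{w}) = e^1(w) = 0$ then places $\tilde{w}$ in $GI^2(X)$.

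The \emph{main obstacle} is the case $n = 3$. Given $w \in I^3(X)$, lift it to $\tilde{w} \in GI^2(X)$ by the previous step; the commutative diagram shows that $ge^2(\tilde{w}) \in \Het^2(X,\muu_2)$ maps to $e^2(w) = 0$ in $\Brtwo(X) \isom {}_2\Het^2(X,\Gm)$, so by the Kummer sequence one has $ge^2(\tilde{w}) = \delta([L])$ for some line bundle $L \in \Pic(X)$. To conclude, one must exhibit a class in $\ker(\pi) \cap GI^2(X)$ whose Clifford-with-involution invariant is precisely $\delta([L])$; the natural candidate is the difference $[H(L)] - [H(\OO_X)]$ of hyperbolic forms. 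The essential computation, which I expect to be the delicate point of the argument, is that $ge^2([H(L)] - [H(\OO_X)]) = \delta([L])$ in $\Het^2(X,\muu_2)$; this relies on the involutive Brauer group construction of Parimala--Srinivas and on the behavior of the center of the Clifford algebra under twists by a line bundle. Subtracting this correction from $\tilde{w}$ yields the desired lift in $GI^3(X)$, completing the induction.
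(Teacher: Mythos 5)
Your proposal is correct and follows essentially the same route as the paper: the paper's argument is a diagram chase (through $K_0(X)$, $K_0'(X)$, $\Pic(X)/2$ and the Kummer sequence) that encodes exactly your lift-and-correct-by-a-hyperbolic-form procedure, with the hypothesis $\Brtwo(X)\isom {}_2\Het^2(X,\Gm)$ used in the same way. The ``essential computation'' you flag, namely $ge^2(H(\LL))=c_1(\LL,\muu_2)$, is precisely the formula the paper imports from Esnault--Kahn--Viehweg \cite[Prop.\ 5.5]{E-K-V} (combined with the explicit Clifford invariant formula), so you have correctly identified the crux rather than left a gap.
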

\begin{proof}
For $n= 1,2$ this is a consequence of the following diagram with
exact rows and columns
$$
\xymatrix@R=15pt@C=30pt{
 & 0 \ar[d] & 0 \ar[d] &  & \\
 & K_0(X) \ar@{=}[r] \ar[d]^H &  K_0(X) \ar[d]^H &  & \\
0 \ar[r] & GI^{n}(X) \ar[d] \ar[r] & GI^{n-1}(X) \ar[r]^(.41){ge^{n-1}} \ar[d] &
\Het^{n-1}(X,\Z/2\Z) \ar@{=}[d] \ar[r] & 0 \\
0 \ar[r] & I^{n}(X) \ar[d] \ar[r] & I^{n-1}(X) \ar[d] \ar[r]^(.41){e^{n-1}} &
\Het^{n-1}(X,\Z/2\Z) \ar[d] \ar[r] & 0 \\
 & 0 & 0 & 0 & \\ 
}
$$
% $$
% \xymatrix@R=15pt@C=40pt{
%  & 0 \ar[d] & 0 \ar[d] &  & \\
%  & K_0(X) \ar@{=}[r] \ar[d]^H &  K_0(X) \ar[d]^H &  & \\
% 0 \ar[r] & GI^{2}(X) \ar[d] \ar[r] & GI^{1}(X) \ar[r]^(.45){ge^{1}} \ar[d] &
% \Het^{1}(X,\muu_2) \ar@{=}[d] \ar[r] & 0 \\
% 0 \ar[r] & I^{2}(X) \ar[d] \ar[r] & I^{1}(X) \ar[d] \ar[r]^(.45){e^{1}} &
% \Het^{1}(X,\muu_2) \ar[d] \ar[r] & 0 \\
%  & 0 & 0 & 0 & \\ 
% }
% $$
which is commutative since hyperbolic spaces have even rank and
trivial signed discriminant.  Here, $K_0(X)$ is the Grothendieck group
of locally free $\OO_X$-modules of finite rank and $H$ is the
hyperbolic map $\VV \mapsto H(\VV) = \bigl(\VV \oplus
\VV\dual,((v,f),(w,g)) \mapsto f(w)+g(x)\bigr)$.  

For $n=3$, we have the formula $ge^2(H(\VV)) = c_1(\VV,\muu_2)$ due to
Esnault--Kahn--Viehweg \cite[Prop.\ 5.5]{E-K-V} (combined with
\eqref{eq:c_w}).  Here $c_1(-,\muu_2)$ is the 1st Chern class modulo
2, defined as the first coboundary map in the long-exact sequence in
\'etale cohomology
$$
\dotsm \to \Pic(X) \mapto{2} \Pic(X) \mapto{c_1} \Het^2(X,\muu_2) \to \Het^2(X,
\Gm) \mapto{2} \Het^2(X,\Gm) \to \dotsm
$$
arising from the Kummer exact sequence
$$
1\to\muu_2\to\Gm\mapto{2}\Gm\to1,
$$
see Grothendieck \cite{grothendieck:classes_chern_representations}.
%By our hypothesis on $X$, we have $\Brtwo(X) \isom {}_2\Het^2(X,\Gm)$.
The claim then follows by a diagram chase through
$$
\xymatrix@R=15pt@C=40pt{
 & 0 \ar[d] & 0 \ar[d] & 0 \ar[d] \\
0 \ar[r] & K_0'(X) \ar[d]^H \ar[r] & K_0(X) \ar[d]^H \ar[r]^{\det} & \Pic(X)/2
\ar[d]^{c_1} \ar[r] & 0\\
0 \ar[r] & GI^{3}(X) \ar[d] \ar[r] & GI^{2}(X) \ar[d] \ar[r]^(.45){ge^2} & \Het^2(X,\muu_2) \ar[d] &\\
0 \ar[r] & I^{3}(X) \ar[d] \ar[r] & I^{2}(X) \ar[d]\ar[r]^(.45){e^2} & \Brtwo(X)\ar[d] & \\
& 0 & 0 & 0 & \\
}
$$
where the right vertical column arises from the Kummer sequence, and
$K_0'(X)$ is the subgroup of $K_0(X)$ generated by locally free
$\OO_X$-modules whose determinant is a square.  
% The commutativity of
% the right hand square follows from the fact that if $(\EE,q)$ is a
% metabolic form, then $ge^2(\EE,q)$ is represented in the involutive
% Brauer group by a split Azumaya algebra (see Knus--Ojanguren
% \cite{knus_ojanguren:metabolic}) with involution, which is thus in the
% image of the 1st Chern class map (modulo 2) by Parimala--Srinivas
% \cite[Thm.\ 1]{parimala_srinivas:brauer_group_involution}.
\end{proof}

\begin{remark}
The hypothesis that 
% $X$ is regular and quasi-projective over a field
% can be relaxed to simply that 
$\Brtwo(X) = {}_2\Het^2(X,\Gm)$
%.  More generally, this 
is satisfied if
$X$ is a quasi-compact quasi-separated scheme admitting an ample
invertible sheaf by de Jong's extension \cite{dejong:gabber} (see also
\cite[Th.\ 2.2.2.1]{lieblich:moduli_twisted_sheaves}) of a result of
Gabber \cite{gabber:brauer}.
% In particular, this holds for regular
% quasiprojective schemes over a field.
\end{remark}

The existence of global globalizations of the higher invariants (e.g.\
a globalization of the Arason invariant) remains a mystery.
Esnault--Kahn--Levine--Viehweg
\cite{esnault_kahn_levine_viehweg:arason} have shown that for a
regular symmetric bilinear form $(\EE,q)$ that represents a class in
$GI^3(X)$, the obstruction to having an Arason invariant in
$\Het^3(X,\Z/2\Z)$ is precisely the 2nd Chern class $c_2(\EE) \in
CH^2(X)/2$ in the Chow group modulo 2 (note that the invariant $c(\EE)
\in \Pic(X)/2$ of \cite{esnault_kahn_levine_viehweg:arason} is trivial
if $(\EE,q)$ represents a class in $GI^3(X)$).  They also provide
examples where this obstruction does not vanish.  
% This fact, together
% with the ad hoc nature of the ideals $I^n(X)$ and $GI^n(X)$, should be
% enough to dissuade us from focusing too much on the global case.
On the other hand, higher cohomological invariants always exist in
unramified cohomology.

\subsection{Unramified globalization}
\label{subsec:Unramified_replacements}

A functorial framework for the notion of ``unramified element'' is
established in Colliot-Th\'el\`ene \cite[\S2]{colliot:santa_barbara}.
See also the survey by Zainoulline
\cite[\S3]{zainoulline:purity_witt_group}.  Rost \cite[Rem.\
5.2]{rost:cycle_modules} gives a different perspective in terms of
cycle modules, also see Morel \cite[\S2]{morel:proof_milnor}.  Assume
that $X$ has finite Krull dimension and is equidimensional over a
field $F$.  For simplicity of exposition, assume that $X$ is integral.
Denote by $X^{(i)}$ its set of codimension $i$ points.
% For a point $\eta$ of $X$, denote
% by $\ol{\eta}$ the Zariski closure of the set $\{\eta\}$.

Denote by $\Local_F$ the category of local $F$-algebras together with
local $F$-algebra morphisms.  Given a functor $M^{\tot} : \Local_F \to
\Abtot$, call
$$ 
M_{\ur}^{\tot}(X)
= %\prod_{\eta \in X^{(0)}} \bigcap_{x \in \ol{\eta}^{(1)}}
\bigcap_{x \in X^{(1)}} \im \bigl( M^{\tot}(\OO_{X,x}) \to
M^{\tot}(F(X)) \bigr)
$$
the group of \linedef{unramified elements} of $M^{\tot}$ over $X$.
Then $X \mapsto M_{\ur}^{\tot}(X)$ is a globalization of $M^{\tot}$ to
schemes.  

Given a functor $M^{\tot} : \Sch_F \to \Abtot$, there is a natural map
$M^{\tot}(X) \to M_{\ur}^{\tot}(X)$.  If this map is injective,
surjective, or bijective we say that the \linedef{injectivity},
\linedef{weak purity}, or \linedef{purity} property hold,
respectively.  Whether these properties hold for various functors
$M^{\tot}$ and schemes $X$ is the subject of many conjectures and open
problems, see Colliot-Th\'el\`ene \cite[\S2.2]{colliot:santa_barbara}
for examples.

% Under certain
% conditions on $M$ (for instance, arising from a cycle module),
% $M_{\ur}$ is a sheaf on the large Zariski site of schemes.

% Given a functor $M^{\tot} : \Ring_F \to \Abtot$ with associated
% Zariski sheaf $\mathcal{M}$, then $\Gamma(X,\unram{M}) = M_{\ur}(X)$
% if 
% No.  Need Bloch-Ogus-Gabber to hold for M.

\subsubsection*{Unramified Milnor $K$-theory.}

Define the \linedef{unramified Milnor $K$-theory (resp.\ modulo 2)} of
$X$ to be the graded ring of unramified elements $\KMilurtot(X)$
(resp.\ $\KMilurtot/2(X)$) of the ``na\"ive'' Milnor $K$-theory
(resp.\ modulo 2) functor $\KMiltot$ (resp.\ $\KMiltot/2$) restricted
to $\Local_F$, see \S\ref{subsec:Global_theories}.  Let $\KKMiltot$ be
the Zariski sheaf on $\Sch_F$ associated to ``na\"ive'' Milnor
$K$-theory and $\KKMiltot/2$ the associate sheaf quotient, which is
the Zariski sheaf associated to the presheaf $U \mapsto \KMiltot(U)/2$, see
Morel \cite[Lemma 2.7]{morel:proof_milnor}.  Then $\KMilurtot(X) =
\Gamma(X,\KKMiltot)$ and $\KMilurtot/2(X) = \Gamma(X,\KKMiltot/2)$
when $X$ is smooth over an infinite field (compare with the remark in
\S\ref{subsec:Global_theories}) by the Bloch--Ogus--Gabber theorem for
Milnor $K$-theory, see Colliot-Th\'el\`ene--Hoobler--Kahn \cite[Cor.\
5.1.11, \S7.3(5)]{colliot-thelene_hoobler_kahn:Bloch-Ogus-Gabber}.
Also, see Kerz \cite{kerz:Gersten_conjecture_Milnor_K-theory}.
Note that the long exact sequence in Zariski cohomology yields a short
exact sequence
$$
0 \to \KMilurtot(X)/2 \to \KMilurtot/2(X) \to
{}_2H^1(X,\KKMiltot) \to 0
$$
still assuming $X$ is smooth over an infinite field.

\subsubsection*{Unramified cohomology.}

Define the \linedef{unramified \'etale cohomology (modulo 2)} of $X$
to be the graded ring of unramified elements
$\Hur^{\tot}(X,\muu_2^{\tensor\tot})$ of the
% Zariski sheaf associated to the
functor $\Het^{\tot}(-,\muu_2^{\tensor\tot})$.  Letting
$\unram{H}_{\et}^{\tot}$ be the Zariski sheaf on $\Sch_F$ associated
to the functor $\Het^{\tot}(-,\muu_2^{\tensor\tot})$, then
$\Gamma(X,\unram{H}_{\et}^{\tot}) = \Hur^{\tot}(X,\Z/2\Z)$ when $X$ is
smooth over a field of characteristic $\neq 2$ by the exactness of the
Gersten complex for \'etale cohomology, see Bloch--Ogus \cite[\S2.1,
Thm.\ 4.2]{bloch_ogus}.

\subsubsection*{Unramified fundamental filtration of the Witt group.}

Define the \linedef{unramified Witt group} of $X$ to be the abelian
group of unramified elements $W_{\ur}(X)$ of the global Witt group
functor $W$.  Letting $\unram{W}$ be the Zariski sheaf associated to
the global Witt group functor, then $W_{\ur}(X) = \Gamma(X,\unram{W})$
when $X$ is regular over a field of characteristic $\neq 2$ by
Ojanguren--Panin \cite{ojanguren_panin:purity} (also see Morel
\cite[Thm.\ 2.2]{morel:proof_milnor}).  Writing $I_{\ur}^{n}(X) =
I^{n}(F(X)) \cap W_{\ur}(X)$, then the functors $I_{\ur}^{n}(-)$ are
Zariski sheaves (still assuming $X$ is regular), denoted by
$\unram{I}^{n}$, which form a filtration of $\unram{W}$, see Morel
\cite[Thm.\ 2.3]{morel:proof_milnor}.

% The Zariski sheaf associated to the presheaf $U \mapsto
% I_{\ur}^{\tot}(U)/I_{\ur}^{\tot+1}(U)$ is isomorphic to the quotient
% sheaf $\unram{I}^{\tot}/\unram{I}^{\tot+1}$ by Morel
% \cite[\S2.4]{morel:proof_milnor}.  
Note that the long exact sequence in Zariski cohomology yields short
exact sequences
$$
0 \to \Iur^{n}(X)/\Iur^{n+1}(X) \to
\unram{I}^{n}/\unram{I}^{n+1}(X) \to H^1(X,\unram{I}^{n+1})' \to 0 
$$
where $H^1(X,\unram{I}^{n+1})' = \ker\bigl( H^1(X,\unram{I}^n) \to
H^1(X,\unram{I}^{n+1}) \bigr)$ and we are still assuming $X$ is
regular over a field of characteristic $\neq 2$.  If the obstruction
group $H^1(X,\unram{I}^{n+!})'$ is nontrivial, then not every element
of $\unram{I}^{n}/\unram{I}^{n+1}(X)$ is represented by a quadratic
form on $X$.  If $X$ is the spectrum of a regular local ring, then
$I_{\ur}^{\tot}(X)/I_{\ur}^{\tot+1}(X) =
\unram{I}^{\tot}/\unram{I}^{\tot+1}(X)$, see Morel \cite[Thm.\
2.12]{morel:proof_milnor}.

\begin{remark}
\label{rem:warning}
As before, the notation $I_{\ur}^n(X)$ does not necessarily mean the
$n$th power of $I_{\ur}(X)$.  This is true, however, when $X$ is the
spectrum of a regular local ring containing an infinite field of
characteristic $\neq 2$, see Kerz--M\"uller-Stach \cite[Cor.\
0.5]{kerz:Milnor-Chow_homomorphism}.  
\end{remark}

% Note that for the spectrum of a regular local ring, the ideals $I^n(X)$ can be
% defined, see \cite{}.

\subsection{Gersten complexes}

Gersten complexes (Cousin complexes) exists in a very general
framework, but for our purposes, we will only need the Gersten complex
for Milnor $K$-theory, \'etale cohomology, and Witt groups.

\subsubsection*{Gersten complex for Milnor $K$-theory.}

Let $X$ be a regular excellent integral $F$-scheme.
% over an infinite field.  
Let $C(X,\KMil^n)$ denote the Gersten complex for Milnor $K$-theory
$$
\def\objectstyle{\scriptstyle} \def\labelstyle{\scriptstyle}
\xymatrix@C=22pt{
0 \ar[r] & 
  \KMil^n(F(X)) \ar[r]^(.4){\res^{\KMil^n}} &
  \smash{\underset{{x \in X^{(1)}}}{\textstyle\bigoplus}} \KMil^{n-1}(F(x)) \ar[r]^(.54){\res^{\KMil^{n-1}}}&
  \smash{\underset{y \in X^{(2)}}{\textstyle\bigoplus}} \KMil^{n-2}(F(y)) \ar[r] & \dotsm 
}
$$
% $$
% \scriptstyle
% %0 \to \bigoplus_{\eta \in X^{(0)}} \KMil^n(\eta) 
% 0 \to \KMil^n(F(X))
%   \mapto{\res^{\KMil^n}} \bigoplus_{x \in X^{(1)}} \KMil^{n-1}(F(x))
%   \mapto{\res^{\KMil^{n-1}}} \bigoplus_{y \in X^{(2)}} \KMil^{n-2}(F(y)) \to \dotsm 
% $$
where 
% $\KMil^n(x)$ denotes the Milnor $K$-group of the residue field
% $F(x)$ for a point $x$ of $X$ and 
$\res^{\KMil}$ is the ``tame symbol'' homomorphism defined in Milnor
\cite[Lemma 2.1]{milnor:conjecture}.
% Describe tame symbol.  
%Then $C(X,\KMil^n)$ is a resolution of $\KMilur^n}(X)$.  
We have that $H^0(C(X,\KMil^n)) = \KMilur^n(X)$.  See Rost
\cite[\S1]{rost:cycle_modules} or Fasel \cite[Ch.\ 2]{fasel:Chow-Witt}
for more details.  We will also consider the Gersten complex
$(C,\KMil^n/2)$ for Milnor $K$-theory modulo 2, for which we have that
$H^0(C(X,\KMil^n/2))=\KMilur^n/2(X)$.

\subsubsection*{Gersten complex for \'etale cohomology.}

Let $X$ be a smooth integral $F$-scheme, with $F$ of characteristic
$\neq 2$.  Let $C(X,H^n)$ denote the Gersten complex for \'etale
cohomology
$$
\def\objectstyle{\scriptstyle} \def\labelstyle{\scriptstyle}
\xymatrix@C=22pt{
0 \ar[r] & H^{n}(F(X)) \ar[r]^(.4){\res^{H^n}} 
& \smash{\underset{{x \in X^{(1)}}}{\textstyle\bigoplus}} H^{n-1}(F(x))
  \ar[r]^(.55){\res^{H^{n-1}}} 
& \smash{\underset{{y \in X^{(2)}}}{\textstyle\bigoplus}}
H^{n-2}(F(y)) \ar[r] & \dotsm 
}
$$
% $$
% %0 \to \bigoplus_{\eta \in X^{(0)}} H^n(\eta,\Z/2\Z) 
% \xymatrix@R=15pt@C=10pt{
% 0 \ar[r] & H^n(F(X),\muu_2^{\tensor n}) \ar[r]%^{\res^{H^n}} 
% & \bigoplus_{x \in X^{(1)}} H^{n-1}(F(x),\muu_2^{\tensor (n-1)})
%   \ar[r]%^{\res^{H^{n-1}}} 
% & \bigoplus_{y \in X^{(2)}} H^{n-2}(F(y),\muu_2^{\tensor (n-2)}) \to \dotsm 
% }
% $$
where 
$H^n(-) = H^{n}(-,\muu_2^{\tensor n})$ and
% $H^n(x,\muu_2^{\tensor n})$ denotes the Galois cohomology group
% of the residue field $F(x)$ for a point $x$ of $X$ and 
$\res^H$ is the homomorphism induced from the spectral sequence
associated to the coniveau filtration, see Bloch--Ogus
\cite{bloch_ogus}.  Then we have that $C(X,H^n)$ is a resolution of
$\Hur^n(X,\muu_2^{\tensor n})$.
% Describe residue maps.  

\subsubsection*{Gersten complex for Witt groups.}

Let $X$ be a regular
% separated noetherian
integral $F$-scheme of
finite Krull dimension.  Let $C(X,W)$ denote the Gersten--Witt complex
$$
\def\objectstyle{\scriptstyle} \def\labelstyle{\scriptstyle}
\xymatrix@C=22pt{
0 \ar[r] & W(F(X)) \ar[r]^(.44){\res^W} & 
\smash{\underset{{x \in X^{(1)}}}{\textstyle\bigoplus}} W(F(x))
  \ar[r]^{\res^W} & 
\smash{\underset{{y \in X^{(2)}}}{\textstyle\bigoplus}}
W(F(y)) \ar[r] & \dotsm 
}
$$
% $$
% % 0 \to \bigoplus_{\eta \in X^{(0)}} W(\eta) 
% 0 \to W(F(X))
%   \mapto{\res^W} \bigoplus_{x \in X^{(1)}} W(F(x))
%   \mapto{\res^W} \bigoplus_{y \in X^{(2)}} W(F(y)) \to \dotsm 
% $$
where 
% $W(x)$ denotes the Witt group of the residue field $F(x)$ for a
% point $x$ of $X$ and 
$\res^W$ is the homomorphism induced from the second residue map for a
set of choices of local parameters, see Balmer--Walter
\cite{balmer_walter:Gersten-Witt}.  Because of these choices, $C(X,W)$
is only defined up to isomorphism, though there is a canonical complex
defined in terms of Witt groups of finite length modules over the
local rings of points.
% Describe residue maps.  
We have that $H^0(C(X,W)) = W_{\ur}(X)$.

\subsubsection*{Fundamental filtration.}

The filtration of the Gersten complex for Witt groups induced by the
fundamental filtration was first studied methodically by
Arason--Elman--Jacob \cite{arason_elman_jacob:graded_Witt_I}, see also
Parimala--Sridharan \cite{parimala_sridharan:graded_Witt}, Gille \cite{gille:graded_Gersten-Witt}, and Fasel
\cite[\S9]{fasel:Chow-Witt}.

The differentials of the Gersten complex for Witt groups respect the
fundamental filtration as follows:
$$
\res^{I^n} \Bigl( \bigoplus_{x \in X^{(p)}} I^n(F(x)) \Bigr) \subset \bigoplus_{y \in X^{(p+1)}} I^{n-1}(F(y)),
$$ 
see Fasel \cite[Thm.\ 9.2.4]{fasel:Chow-Witt} and Gille
\cite{gille:graded_Gersten-Witt}.  Thus for all $n \geq 0$ we have
complexes $C(X,I^n)$
$$
\def\objectstyle{\scriptstyle} \def\labelstyle{\scriptstyle}
\xymatrix@C=22pt{
0 \ar[r] & I^{n}(F(X)) \ar[r]^(.4){\res^{I^n}} 
& \smash{\underset{{x \in X^{(1)}}}{\textstyle\bigoplus}} I^{n-1}(F(x))
  \ar[r]^(.55){\res^{I^{n-1}}} 
& \smash{\underset{{y \in X^{(2)}}}{\textstyle\bigoplus}}
I^{n-2}(F(y)) \ar[r] & \dotsm 
}
$$
% $$
% %0 \to \bigoplus_{\eta \in X^{(0)}} I^n(\eta) 
% 0 \to I^n(F(X)) \mapto{\res^{I^n}} \bigoplus_{x \in X^{(1)}}
% I^{n-1}(F(x)) \mapto{\res^{I^{n-1}}} \bigoplus_{y \in X^{(2)}}
% I^{n-2}(F(y)) \to \dotsm
% $$
which provide a filtration of $C(X,W)$ in the category of complexes of
abelian groups.  Here we write $I^n(-) = W(-)$ for $n \leq 0$.  We
have that $H^0(C(X,I^n)) = I_{\ur}^n(X)$.

The canonical inclusion $C(X,I^{n+1}) \to C(X,I^{n})$ respects the
differentials, and so defines a cokernel complex $C(X,I^n/I^{n+1})$
$$
\def\objectstyle{\scriptstyle} \def\labelstyle{\scriptstyle}
\xymatrix@C=16pt{
0 \ar[r] & I^n/I^{n+1}(F(X)) \ar[r]%^(.4){\res^{I^n/I^{n+1}}} 
&\smash{\underset{{x \in X^{(1)}}}{\textstyle\bigoplus}}
I^{n-1}/I^n(F(x)) \ar[r]%^{\res^{I^{n-1}/I^n}} 
&\smash{\underset{{y \in X^{(2)}}}{\textstyle\bigoplus}}
I^{n-2}/I^{n-1}(F(y)) \ar[r] & \dotsm
}
$$
% $$
% \xymatrix@R=15pt@C=10pt{
% 0 \ar[r] & I^n/I^{n+1}(F(X)) \ar[r]%^{\res^{I^n/I^{n+1}}} 
% &\bigoplus_{x \in X^{(1)}}
% I^{n-1}/I^n(F(x)) \ar[r]%^{\res^{I^{n-1}/I^n}} 
% &\bigoplus_{y \in X^{(2)}}
% I^{n-2}/I^{n-1}(F(y)) \to \dotsm
% }
% $$
see Fasel \cite[D\'ef.\ 9.2.10]{fasel:Chow-Witt}, where
$I^n/I^{n+1}(L) = I^n(L)/I^{n+1}(L)$ for a field $L$.  
We have that $H^0(C(X,I^n/I^{n+1}))=\unram{I}^n/\unram{I}^{n+1}(X)$
% In general,
% there is only an inclusion on the level of cohomology
% $I_{\ur}^n/I_{\ur}^{n+1}(X) \subsetto
% H^0(C(X,I^n/I^{n+1}))$.%, see Remark \ref{}.%\ref{rem:wrong}.

\subsubsection*{Unramified norm residue symbol.}

The norm residue symbol for fields provides a morphism of complexes
$h^n : C(X,\KMil^n/2) \to C(X,H^n)$, where the map on terms of degree
$j$
%$C^j(X,\KMil^n/2)\to C^j(X,H^n)$ 
is $h^{n-j}$.  By the Milnor conjecture for fields, this is an
isomorphism of complexes.  Upon restriction, we have an isomorphism
$h_{\ur}^n : \KMilur^n/2(X) \to \Hur^n(X,\muu_2^{\tensor n})$.  Upon
further restriction, we have an injection $h_{\ur}^n : \KMilur^n(X)/2 \to \Hur^n(X,\muu_2^{\tensor n})$.

\subsubsection*{Unramified Pfister form map.}

The Pfister form map for fields provides a morphism of
complexes $s^n : C(X,\KMil^n/2) \to C(X,I^n/I^{n+1})$, where the map
%$C^j(X,\KMil^n/2) \to C^j(X,I^n/I^{n+1})$ 
on terms of degree $j$ is $s^{n-j}$.  Upon restriction, we have a
homomorphism $s_{\ur}^n : \KMilur^n/2(X) \to
\unram{I}^n/\unram{I}^{n+1}(X)$.  See Fasel \cite[Thm.\
10.2.6]{fasel:Chow-Witt}.

\subsubsection*{Unramified higher cohomological invariants.}

By the Milnor conjecture for fields, there exists a higher
cohomological invariant morphism of complexes $e^n : C(X,I^n) \to
C(X,H^n)$, where the map 
%$C^j(X,I^n) \to C^j(X,H^n)$
on terms of degree $j$ is $e^{n-j}$.  Upon
restriction, we have homomorphisms $e_{\ur}^{n} : I_{\ur}^n(X) \to
\Hur^n(X,\muu_2^{\tensor n})$ factoring through to $e_{\ur}^{n} :
I_{\ur}^n(X)/I_{\ur}^{n+1}(X) \to \Hur^n(X,\muu_2^{\tensor n})$. 

Furthermore, on the level of complexes, the higher cohomological
invariant morphisms factors through to a morphism of complexes $e^n :
C(X,I^n/I^{n+1}) \to C(X,H^n)$, which by the Milnor conjecture over
fields, is an isomorphism.  Upon restriction, we have isomorphisms
$e_{\ur}^n : \unram{I}^n/\unram{I}^{n+1}(X) \to
\Hur^n(X,\muu_2^{\tensor n})$.  Also see Morel
\cite[\S2.3]{morel:proof_milnor}.

\subsection{Motivic globalization}
\label{motivic}

There is another important globalization of Milnor $K$-theory and
Galois cohomology, but we only briefly mention it here.  Conjectured
to exist by Be\u{\i}linson \cite{beilinson:pairings} and Lichtenbaum
\cite{lichtenbaum:values}, and then constructed by Voevodsky
\cite{voevodsky:Milnor_conjecture_I}, motivic complexes modulo 2 give
rise to Zariski and \'etale motivic cohomology groups modulo 2
$H_{\Zar}^{n}(X,\Z/2\Z(m))$ and $H_{\et}^{n}(X,\Z/2\Z(m))$.

For a field $F$, Nesterenko--Suslin \cite{nesterenko_suslin} and
Totaro \cite{totaro:Milnor_K-theory} establish a canonical isomorphism
$H_{\Zar}^{n}(\Spec F,\Z/2\Z(n)) \isom \KMil^2(F)/2$ while the work of
Bloch, Gabber, and Suslin (see the survey by Geisser
\cite[\S1.3.1]{geisser:K-theory_handbook}) establishes an isomorphism
$H_{\et}^{n}(\Spec F,\Z/2\Z(n)) \isom H^n(F,\muu_2^{\tensor n})$ (for
$F$ of characteristic $\neq 2$).  The natural pullback map
$$
\varepsilon^* : H_{\Zar}^{n}(\Spec F,\Z/2\Z(n)) \to H_{\et}^{n}(\Spec
F,\Z/2\Z(n))
$$ 
induced from the change of site $\varepsilon : X_{\et} \to X_{\Zar}$
is then identified with the norm residue homomorphism.  Thus
$H_{\Zar}^{n}(-,\Z/2\Z(n))$ and $\Het^{n}(-,\Z/2\Z(n))$ provide
\linedef{motivic} globalizations of the mod 2 Milnor $K$-theory and
Galois cohomology functors, respectively.  On the other hand, there
does not seem to exist a direct motivic globalization of the Witt
group or its fundamental filtration.

% Restricting the norm residue symbol and Pfister form map on the
% Gersten complexes yields well defined homomorphisms

% Mention Gersten complex and Gersten spectral sequence and gradings,
% from Gille \cite{gille:graded_Gersten-Witt}...

\section{Globalization of the Milnor conjecture}
\label{subsec:Unramified_Milnor_conjecture}

\subsubsection*{Unramified.}
Let $F$ be a field of characteristic $\neq 2$.  Summarizing the
results cited in \S2.2--2.3, we have a commutative diagram of isomorphisms
\begin{equation*}
\xymatrix{
\KKMil^{\tot}/2 \ar[r]^{h^{\tot}} \ar[d]_{s^{\tot}}& \unram{H}_{\et}^{\tot} \\
\unram{I}^{\tot}/\unram{I}^{\tot+1} \ar[ur]_{e^{\tot}}& 
}
\end{equation*} 
of sheaves of graded abelian groups on $\Sm_F$.  In particular, we
have such a commutative diagram of isomorphisms on the level of global
sections.  What we will consider as a globalization of the Milnor
conjecture --- the \linedef{unramified Milnor question} --- is a
refinement of this. 

\begin{itemize}
\item[(S)] Let $X$ be a smooth scheme over a field of characteristic $\neq 2$.
Then restricting $s_{\ur}^{\tot}$ gives rise to a homomorphism
$s_{\ur}^{\tot} : \KMilurtot(X)/2 \to
\Iur^{\tot}(X)/\Iur^{\tot+1}(X)$.
\end{itemize}

\begin{question}[Unramified Milnor question]
\label{question}
Let $X$ be a smooth scheme over a field of characteristic $\neq 2$.
Consider the following diagram:
$$
\xymatrix{
\KMilurtot(X)/2 \ar@{-->}[d]_{?} \ar@{^{(}->}[r]^(.54){i_K^{\tot}} & \KKMil^{\tot}/2(X) \ar[r]^{h_{\ur}^{\tot}} \ar[d]_{s_{\ur}^{\tot}}& \Hur^{\tot}(X,\muu_2^{\tensor\tot}) \\
I_{\ur}^{\tot}(X)/I_{\ur}^{\tot+1}(X) \ar@{^{(}->}[r]^(.54){i_I^{\tot}} &
\unram{I}^{n}/\unram{I}^{n+1}(X) \ar[ur]_{e_{\ur}^{\tot}}& 
}
$$
\begin{enumerate}
\item Is the inclusion $i_I^{\tot}$ surjective?

\item Is the inclusion $i_K^{\tot}$ surjective? 

\item Does the restriction of $s_{\ur}^{\tot}$ to $\KMilurtot(X)/2$
have image contained in $I_{\ur}^{\tot}(X)/I_{\ur}^{\tot+1}(X)$?  If
so, is it an isomorphism?
\end{enumerate}
\end{question}

% If Question \ref{question} (1) (resp.\ (2)) has a positive answer over
% a given scheme $X$, then we say that ``the unramified Milnor
% conjecture for quadratic forms (resp.\ Milnor $K$-theory) holds over
% $X$.''  
Note that in degree $n$, Questions \ref{question} (1), (2), and (3)
can be rephrased in terms of the obstruction groups, respectively:
does $H^1(X,\unram{I}^{n+1})'$ vanish; does ${}_2 H^1(X,\KKMil^{n})$
vanish; and does the restriction of $s_{\ur}^n$ yield a map ${}_2
H^1(X,\KKMil^{n}) \to H^1(X,\unram{I}^{n+1})'$ and is it an
isomorphism?

From now on we shall focus mainly on the unramified Milnor question
for quadratic forms (i.e.\ Question \ref{question}(1)), which was already explicitly asked by
Parimala--Sridharan \cite[Question Q]{parimala_sridharan:graded_Witt}.

\subsubsection*{Global Grothendieck--Witt.}
We mention a global globalization of the Milnor conjecture for
quadratic forms.  Because of the conditional definition of the global
cohomological invariants, we restrict ourselves to the classical
invariants on Grothendieck--Witt groups defined in
\S\ref{subsec:Global_theories}.

\begin{question}[Global Merkurjev question]
\label{question:GW}
Let $X$ be a regular scheme with 2 invertible.  For $n \leq 2$,
consider the homomorphisms,
$$
ge^n : GI^n(X)/GI^{n+1}(X) \to \Het^n(X,\Z/2\Z)
$$
induced from the (classical) cohomological invariants on
Grothendieck--Witt groups.  Are they surjective?
\end{question}

This can be viewed as a globalization of Merkurjev's theorem. Indeed,
first note that the cases $n=0,1$ of Question \ref{question:GW} are
easy.  Next, a consequence of Lemma \ref{lem:GW} is that $ge^2 :
GI^2(X) \to \Het^2(X,\Z/2\Z)$ is surjective if and only $e^2 : I^2(X)
\to \Brtwo(X)$ is surjective. This, in turn, is a consequence of a
positive answer to Question \ref{question}(1) for any $X$ satisfying
weak purity for the Witt group (see \S\ref{subsec:purity} for
examples).

\subsubsection*{Motivic.}

The globalization of the Milnor conjecture for Milnor $K$-theory using
Zariski and \'etale motivic cohomology modulo 2 (see \S\ref{motivic})
is the $(n,n)$ modulo 2 case of the Be\u{\i}linson--Lichtenbaum
conjecture: for a smooth variety $X$ over a field, the canonical map
$H_{\Zar}^{n}(X,\Z/2\Z(m)) \to H_{\et}^{n}(X,\Z/2\Z(m))$ is an
isomorphism for $n \leq m$.  The combined work of Suslin--Voevodsky
\cite{suslin_voevodsky} and Geisser--Levine
\cite{geisser_levine:bloch-kato} show the Be\u{\i}linson--Lichtenbaum
conjecture to be a consequence of the Bloch--Kato conjecture, a proof
of which has been announced by Rost and Voevodsky.

\subsection{Some purity results}
\label{subsec:purity}

In this section we review some of the purity results (see
\S\ref{subsec:Unramified_replacements}) relating the global and
unramified Witt groups and cohomology.

\subsubsection*{Purity results for Witt groups.}

For a survey on purity results for Witt groups, see Zainoulline
\cite{zainoulline:purity_witt_group}.  Purity for the global Witt
group means that the natural map $W(X) \to W_{\ur}(X)$ is an
isomorphism.

\begin{theorem}
\label{purity_witt}
Let $X$ be a regular integral noetherian scheme with 2 invertible.
Then purity holds for the global Witt group functor under the
following hypotheses:
\begin{enumerate}
\item $X$ is dimension $\leq 3$,
\item $X$ is the spectrum of a regular local ring of dimension $\leq 4$,
\item $X$ is the spectrum of a regular local ring containing a field.
\end{enumerate}
\end{theorem}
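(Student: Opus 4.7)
The plan is to handle all three cases through a single mechanism: the Gersten--Witt complex $C(X,W)$ of Balmer--Walter, combined with the identification $W_{\ur}(X) = H^0(C(X,W))$ recalled earlier. Purity amounts to showing that the natural edge map $W(X) \to H^0(C(X,W))$ is an isomorphism. The organizing tool is the Balmer--Walter spectral sequence $E_2^{p,q} = H^p(X, \unram{W}^q) \Rightarrow W^{p+q}(X)$, converging to the derived (equivalently, for regular $X$ with $2$ invertible, global) Witt groups. The proof then splits into (a) showing the shifted Witt sheaves $\unram{W}^q$ vanish for $q \neq 0$ in the relevant range, and (b) extracting the isomorphism $W(X) \isom H^0(X, \unram{W}) = W_{\ur}(X)$ from the spectral sequence.

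For case (3), I would simply invoke the Ojanguren--Panin theorem \cite{ojanguren_panin:purity} already cited for the identity $W_{\ur}(X) = \Gamma(X, \unram{W})$. Their geometric presentation lemma shows that any form over $F(R)$ unramified at every height-one prime of a regular local $F$-algebra $R$ extends to a form over $R$ itself, so surjectivity of $W(R) \to W_{\ur}(\Spec R)$ is immediate; injectivity is controlled by the vanishing of hyperbolic classes at the generic point. This works in arbitrary dimension.

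For case (2), the Gersten--Witt conjecture for regular local rings of dimension $\leq 4$ with $2$ invertible (Balmer--Walter, Pardon) provides exactness of $C(\Spec R, W)$ in positive degrees and vanishing of $\unram{W}^q$ for $q \neq 0$ in the relevant range. Since higher Zariski cohomology vanishes on a local scheme, the spectral sequence collapses and delivers $W(R) \isom W_{\ur}(\Spec R)$ directly. For case (1), when $\dim X \leq 3$ the spectral sequence has only finitely many possibly nonzero terms, the Gersten--Witt resolution of $\unram{W}$ is known in this low-dimensional range, and a short diagram chase through the edge maps gives $W(X) \isom H^0(X, \unram{W}) = W_{\ur}(X)$.

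The main obstacle is case (3): it is not accessible via formal manipulation of the Gersten--Witt complex in unbounded dimension, and genuinely requires the geometric content of the Ojanguren--Panin presentation lemma (realizing the local ring as a limit of essentially smooth algebras over the field and invoking a transfer/specialization argument). Cases (1) and (2) are, by contrast, essentially formal consequences of known low-dimensional exactness results once the Balmer--Walter machinery is in place.
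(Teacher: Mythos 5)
Your proposal is correct and follows essentially the same route as the paper, which proves this theorem purely by citation: Balmer--Walter \cite{balmer_walter:Gersten-Witt} for the general dimension $\leq 3$ case and for regular local rings of dimension $\leq 4$ (with the dimension $\leq 2$ case credited to \cite{colliot-thelene_sansuc:fibres_quadratiques} and the affine threefold case to \cite{ojanguren_parimala_sridharan_suresh:purity_threefolds}), and Ojanguren--Panin \cite{ojanguren_panin:purity} for regular local rings containing a field. Your sketch of the Gersten--Witt spectral sequence argument for (1) and (2) and of the presentation-lemma/Popescu mechanism behind (3) is a fair summary of exactly those cited proofs, so there is no substantive difference in approach.
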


For part (1), the case of dimension $\leq 2$ is due to
Colliot-Th\'el\`ene--Sansuc \cite[Cor.\
2.5]{colliot-thelene_sansuc:fibres_quadratiques}, the case of
dimension $3$ and $X$ affine is due to
Ojanguren--Parimala--Sridharan--Suresh
\cite{ojanguren_parimala_sridharan_suresh:purity_threefolds}, and for
the general case (as well as (2)) see Balmer--Walter
\cite{balmer_walter:Gersten-Witt}.  For (3), see Ojanguren--Panin
\cite{ojanguren_panin:purity}.

As a consequence, for any scheme over which purity for the Witt group
holds, the unramified Milnor question for $e^n$
% and $ge^{\tot}$ 
(with $n\leq 2$) is equivalent to the analogous global Milnor
question.  This is especially useful for the case of curves.
% For any affine scheme over which
% purity for the Witt group holds, the global and unramified Milnor
% conjectures for $e^{\tot}$ are equivalent.

\subsubsection*{Purity results for \'etale cohomology.}

For $X$ geometrically locally factorial and integral, the purity
property holds for unramified cohomology in degree $\leq 1$, i.e.\
$$
\Het^0(X,\Z/2\Z) = \Hur^0(X,\Z/2\Z) = \Z/2\Z, \quad \text{and} \quad
\Het^1(X,\muu_2) = \Hur^1(X,\muu_2)
$$ 
see Colliot-Th\'el\`ene--Sansuc \cite[Cor.\ 3.2, Prop.\
4.1]{colliot-thelene_sansuc:type_multiplicatif}.
%  (note that $\HZar^1(X,\muu_2)=0$ since $\muu_2$ is flasque in the
%  Zariski topology).  

For $X$ smooth over a field of characteristic $\neq 2$, we have a
canonical identification $\Brtwo(X) = \Hur^2(X,\muu_2)$ by Bloch--Ogus
\cite{bloch_ogus} such that the canonical map $\Het^2(X,\muu_2) \to
\Hur^2(X,\muu_2) = \Brtwo(X)$ is the map arising from the Kummer exact
sequence already considered in the proof of Lemma \ref{lem:GW}.

% Let $X$ be a noetherian regular integral scheme of dimension 1, then
% for $\tot=2$ we have $\Brtwo(X) = \Hur^2(X,\Z/2\Z)$, see
% Grothendieck \cite[Prop.\ 2.1]{grothendieck:Brauer_III}, and
% $\Het^2(X,\Z/2\Z) \to \Brtwo(X)$ is the canonical forgetful map
% arising from the Kummer exact sequence.

\subsection{Positive results}
\label{subsec:Results}

We now survey some of the known positive cases of the unramified
Milnor question in the literature.

\begin{theorem}[{Kerz--M\"{u}ller-Stach \cite[Cor.\ 0.8]{kerz:Milnor-Chow_homomorphism}, Kerz \cite[Thm.\ 1.2]{kerz:Gersten_conjecture_Milnor_K-theory}}]
Let $R$ be a local ring with infinite residue field of characteristic
$\neq 2$.  Then the unramified Milnor question (all parts of Question
\ref{question}) has a positive answer over $\Spec R$.
\end{theorem}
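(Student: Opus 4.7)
Writing $X = \Spec R$, the plan is to reduce all three parts of Question \ref{question} to the isomorphisms of the unramified Milnor conjecture on $\Sm_F$-sheaves recorded at the start of \S3 by showing that the two horizontal inclusions $i_K^{\tot}$ and $i_I^{\tot}$ in the diagram are in fact equalities. Once this is done, the dashed arrow of part (3) fills in tautologically, and the whole diagram collapses to the isomorphism diagram of the unramified Milnor conjecture applied to $X$.

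For the inclusion $i_I^{\tot}$ of part (1), Morel's Theorem 2.12 gives the equality $I_{\ur}^{\tot}(X)/I_{\ur}^{\tot+1}(X) = \unram{I}^{\tot}/\unram{I}^{\tot+1}(X)$ for the spectrum of a regular local ring (as recalled just above Remark \ref{rem:warning}), so $i_I^{\tot}$ is an isomorphism. The Kerz--M\"uller-Stach identification of $I_{\ur}^n(X)$ with the honest $n$-th power of $I_{\ur}(X)$, cited in Remark \ref{rem:warning}, confirms that these groups are the expected quotients of powers of the fundamental ideal, but is not strictly needed for the surjectivity of $i_I^{\tot}$ itself.

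For the inclusion $i_K^{\tot}$ of part (2), the decisive input is Kerz's proof of Gersten's conjecture for Milnor $K$-theory in \cite{kerz:Gersten_conjecture_Milnor_K-theory}, which requires precisely the hypothesis that the residue field of $R$ be infinite. This yields exactness of the Gersten complexes $C(X,\KMil^n)$ and $C(X,\KMil^n/2)$ in positive degree, and hence the identifications $\KMilur^n(X) = \KMil^n(R)$ and $\KMilur^n/2(X) = \KMil^n(R)/2$. Comparing these identifications, the obstruction group ${}_2H^1(X,\KKMil^{\tot})$ appearing in the short exact sequence of \S2.2 vanishes, and $i_K^{\tot}$ is an isomorphism.

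With both $i_K^{\tot}$ and $i_I^{\tot}$ equalities, part (3) is immediate: the restriction of $s_{\ur}^{\tot}$ lands automatically in the smaller group, and it is an isomorphism because $s_{\ur}^{\tot}$ is, by the unramified Milnor conjecture. The main obstacle of the plan is clearly Kerz's Gersten conjecture for Milnor $K$-theory, which is where the hypothesis on the residue field enters in an essential way; everything else is diagram-chasing in Question \ref{question} together with Morel's purity/filtration identification for $\unram{I}^{\tot}/\unram{I}^{\tot+1}$.
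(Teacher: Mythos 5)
Your argument is correct and amounts to the same justification the paper intends: the paper gives no proof beyond citing Kerz's Gersten conjecture for Milnor $K$-theory and Kerz--M\"uller-Stach, and your assembly of Kerz's theorem (where the infinite residue field hypothesis enters) with Morel's identification $I_{\ur}^{\tot}(\Spec R)/I_{\ur}^{\tot+1}(\Spec R)=\unram{I}^{\tot}/\unram{I}^{\tot+1}(\Spec R)$ for regular local rings, plus the field-level Milnor conjecture through the sheaf isomorphisms of \S\ref{subsec:Unramified_Milnor_conjecture}, is exactly that route (note also that for a local scheme all Zariski $H^1$ obstruction groups vanish for trivial reasons once the global sections are identified with the unramified groups). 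The only caveat, inherited from the survey's loose statement rather than from your argument, is that $R$ must implicitly be regular and (essentially) over a field for the unramified framework, Morel's theorem, and Kerz's theorem to apply.
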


Hoobler \cite{hoobler:Merkurjev-Suslin_semilocal_ring} had already
proved this in degree $2$.  
% Since purity holds for such local rings,
% the unramified Milnor question is equivalent to its global
% counterpart,.

% \begin{theorem}[{Kerz \cite{kerz:Gersten_conjecture_Milnor_K-theory}}]
% Let $X$ be a smooth scheme over an infinite field $F$.  Then the
% unramified Milnor conjecture for $h_{\ur}^{\tot}$ holds over $X$.
% \end{theorem}

The following result was communicated to us by Stefan Gille (who was
inspired by Totaro \cite{totaro:non-injectivity}).

\begin{theorem}
Let $X$ be a proper smooth integral variety over a field of
characteristic $\neq 2$.  If $X$ is $F$-rational then the unramified
Milnor question (all parts of Question \ref{question}) has a positive
answer over $X$.
\end{theorem}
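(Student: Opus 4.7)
The plan is to reduce the unramified Milnor question over $X$ to the Milnor conjecture over $F$, which is now a theorem. The key observation is that every functor appearing in the diagram of Question~\ref{question} --- namely $\KMilur^{\tot}/2$, $I_{\ur}^{\tot}/I_{\ur}^{\tot+1}$, $\KKMil^{\tot}/2$, $\unram{I}^{\tot}/\unram{I}^{\tot+1}$, and $\Hur^{\tot}(-, \muu_2^{\tensor \tot})$ --- should be a birational invariant of smooth proper $F$-varieties and satisfy $\A^1$-homotopy invariance. Since $X$ is $F$-rational, hence birational to $\P^N_F$ for some $N$, combining birational invariance with the $\A^1$-invariance identification $M(\P^N_F) \cong M(\Spec F)$ would yield an isomorphism $\pi^* : M(\Spec F) \xrightarrow{\,\sim\,} M_{\ur}(X)$ induced by the structure morphism $\pi : X \to \Spec F$, for each such $M$.

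Once this is in place, naturality of the diagram of Question~\ref{question} in $X$ identifies the unramified Milnor diagram over $X$ with its counterpart over $\Spec F$; the latter is the classical Milnor conjecture diagram (in which $i_I$ and $i_K$ degenerate to identities), and it consists of isomorphisms. This simultaneously answers all three parts of Question~\ref{question} affirmatively.

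The main obstacle lies in establishing the appropriate invariance properties for the genuine filtration quotient $I_{\ur}^n(X)/I_{\ur}^{n+1}(X)$ and for $\KMilur^n(X)/2$, as opposed to the sheaf-theoretic $\unram{I}^n/\unram{I}^{n+1}(X)$ and $\KKMil^n/2(X)$, which are birational invariants essentially by construction. Equivalently, one must show the vanishing of the obstruction groups ${}_2 H^1(X, \KKMil^n)$ and $H^1(X, \unram{I}^{n+1})'$ from the exact sequences in \S\ref{subsec:Unramified_replacements}. Here one can exploit an $F$-rational point $p \in X(F)$, which exists because $X$ is rational: since $\pi \circ p = \id_{\Spec F}$, the composition $p^* \circ \pi^*$ is the identity on $M(\Spec F)$, so $\pi^*$ is split injective, and surjectivity should follow from $\A^1$- and birational invariance applied to a birational map $X \dashrightarrow \P^N_F$ together with the computation of $M$ on $\P^N_F$. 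For the sheafified functors, birational invariance is classical: for $\Hur^{\tot}$ by Bloch--Ogus, for $\unram{W}$ by Ojanguren--Panin, and for $\KKMil^{\tot}/2$ by Kerz; these, passed through the sheaf-level isomorphisms supplied by the Milnor conjecture over fields (\S2.3), handle the right-hand side of the diagram.
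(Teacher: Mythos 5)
Your overall strategy -- reduce everything to the Milnor conjecture over $F$ by showing the unramified groups in Question \ref{question} are birational invariants of smooth proper $F$-varieties and are computed on $\P^N_F$ -- is exactly the paper's. But the step you yourself flag as ``the main obstacle'' is not actually closed. For the honest groups $\Iur^n(X)$ and $\KMilur^n(X)$ (hence for $\Iur^n(X)/\Iur^{n+1}(X)$ and $\KMilur^n(X)/2$), your argument amounts to: split injectivity of $\pi^*$ from a rational point, plus the assertion that surjectivity ``should follow from $\A^1$- and birational invariance'' -- but birational invariance of precisely these non-sheaf-theoretic functors is the statement whose proof is in question, so as written the surjectivity claim is circular. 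The references you invoke (Bloch--Ogus, Ojanguren--Panin, Kerz) concern $\Hur^{\tot}$, $W_{\ur}=\Iur^0$ and the Zariski-sheaf objects $\KKMil^{\tot}/2$, $\unram{I}^n/\unram{I}^{n+1}$; they do not by themselves treat $\Iur^n$ for $n\geq 1$ or $\KMilur^n$, which is where the content of the theorem lies. Two smaller points: the existence of $p\in X(F)$ is not immediate from rationality alone (over a finite field an open subset of $\P^N$ can miss all rational points); one needs Lang--Nishimura for the smooth proper $X$. And evaluating an unramified class of $I^n(F(X))$ at $p$ already requires the specialization formalism you are trying to sidestep.

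The paper fills this gap directly and thereby avoids the rational point and the obstruction-group detour: the groups $\KMilur^n(X)$, $\Hur^n(X,\muu_2^{\tensor n})$ and $\Iur^n(X)$ themselves are birational invariants of smooth proper $F$-varieties, either by Colliot-Th\'el\`ene's formalism of unramified elements \cite[Prop.\ 2.1.8e]{colliot:santa_barbara} together with weak purity for regular local rings (Theorem \ref{purity_witt}, \cite{ojanguren_panin:purity}), or because the complexes $C(X,\KMil^n)$, $C(X,H^n)$, $C(X,I^n)$ arise from cycle modules \cite[Cor.\ 12.10]{rost:cycle_modules}; and by \cite[Prop.\ 2.1.9]{colliot:santa_barbara} (Milnor's computation for $\P^1$ \cite[Thm.\ 2.3]{milnor:conjecture}) the pullbacks $\KMil^n(F)\isom\KMilur^n(\P^m)$, $H^n(F,\muu_2^{\tensor n})\isom\Hur^n(\P^m,\muu_2^{\tensor n})$, $I^n(F)\isom\Iur^n(\P^m)$ are isomorphisms. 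Since $X$ is $F$-rational, the same holds over $X$, compatibly in $n$, so $\KMilur^n(X)/2\isom\KMil^n(F)/2$ and $\Iur^n(X)/\Iur^{n+1}(X)\isom I^n(F)/I^{n+1}(F)$, while $\unram{I}^n/\unram{I}^{n+1}(X)\isom\Hur^n(X,\muu_2^{\tensor n})\isom H^n(F,\muu_2^{\tensor n})$; all three parts of Question \ref{question} then follow from the Milnor conjecture over $F$. If you supply the cited invariance statements for $\Iur^n$ and $\KMilur^n$, your outline becomes the paper's proof; without them, the crucial surjectivity remains unproved.
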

\begin{proof}
The groups $K_{\mathrm{M},\ur}^n(X)$, $\Hur^n(X,\muu_2^{\tensor n})$,
and $I_{\ur}^n(X)$ are birational invariants of smooth proper
$F$-varieties.  To see this, one can use Colliot-Th\'el\`ene
\cite[Prop.\ 2.1.8e]{colliot:santa_barbara} and the fact that the
these functors satisfy weak purity for regular local rings (see
Theorem \ref{purity_witt}).  Another
proof uses the fact that the complexes $C(X,\KMil^n)$, $C(X,H^n)$, and
$C(X,I^n)$ are cycle modules in the sense of Rost, see \cite[Cor.\
12.10]{rost:cycle_modules}.  In any case, by Colliot-Th\'el\`ene
\cite[Prop.\ 2.1.9]{colliot:santa_barbara} the pullback induces
isomorphisms $\KMil^{n}(F) \isom K_{\mathrm{M},\ur}^{n}(\P^m)$ (first
proved by Milnor \cite[Thm.\ 2.3]{milnor:conjecture} for $\P^1$),
$H^n(F,\muu_2^{\tensor n}) \isom \Hur^n(\P^m,\muu_2^{\tensor n})$, and
$I_{\ur}^n(F) \isom I_{\ur}^n(F)(\P^m)$ for all $n \geq 0$ and $m \geq
1$.  In particular, $\KMil^{n}(F)/2 \isom \KMilur^n(X)/2$ and
$I_{\ur}^n(F)/\Iur^{n+1}(F) \isom \Iur^n(X)/\Iur^{n+1}(X)$, and the
theorem follows from the Milnor conjecture over fields.
\end{proof}

The following positive results are known for low dimensional schemes.
Recall the notion of \linedef{cohomological dimension} $\cd(F)$ of a
field (see Serre \cite[I \S3.1]{serre:cohomologie_galoisienne}),
\linedef{virtual cohomological 2-dimension} $\vcd_2(F) = \cd_2(
F(\sqrt{-1}))$ and their 2-primary versions.  Denoting by $d(F)$ any
of these notions of dimension, note that if $d(F) \leq k$ and $\dim X
\leq l$ then $d (F(X)) \leq k+l$.  
% For $n > \cd_2(F(X))$, we have
% $\unram{H}^n=0$ (when restricted to the small Zariski site of $X$) and
% hence $\unram{I}^n=\unram{I}^{n+1}$ and $\KKMil^n=2\KKMil^n$; in
% particular, Questions \ref{question}(1), (2), and (3) have
% positive answers over $X$ in degree $> \cd_2(F(X))$.  Also,
% $\unram{I}^n \isom \unram{H}^n$ for $n=\cd_2(F(X))$, see
% Parimala--Sridharan \cite[Lemma 4.1]{parimala_sridharan:graded_Witt}. 

% \begin{theorem}[{Parimala--Sridharan
% \cite[Lemma 4.2, Thm.\ 6.1]{parimala_sridharan:graded_Witt}}]
% Let $X$ be a smooth integral affine curve over a field $F$ of
% characteristic $\neq 2$.  Then the unramified Milnor conjecture holds
% when $\cd_2 F \leq 2$ or $F$ is a global field.
% \end{theorem}

% \begin{theorem}
% Let $X$ be a smooth integral proper curve over a field $F$ of
% characteristic $\neq 2$.  The the unramified Milnor conjecture holds
% when 
% \end{theorem}

\begin{theorem}[{Parimala--Sridharan
\cite{parimala_sridharan:graded_Witt}, Monnier \cite{monnier:unramified}}]
\label{thm:curves}
Let $X$ be a smooth integral curve over a field $F$ of characteristic
$\neq 2$.  Then the unramified Milnor question for quadratic forms
(Question \ref{question}(1)) has a positive answer over $X$ in the
following cases:
\begin{enumerate}
% \item $F$ is quadratically closed, i.e.\ $\cd_2 F=0$,
% \item $F$ is real closed, i.e.\ $\vcd F = 0$,
\item $\cd_2(F) \leq 1$,

\item $\vcd (F) \leq 1$, 

\item $\cd_2 (F) = 2$ and $X$ is affine,

\item $\vcd (F) = 2$ and $X$ is affine.
\end{enumerate}
\end{theorem}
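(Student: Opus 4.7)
The plan is to translate surjectivity of $i_I^n$ into vanishing of the obstruction groups $H^1(X, \unram{I}^{n+1})'$ and control these via the Gersten--Witt complex on the $1$-dimensional $X$. From the short exact sequence
$$
0 \to \Iur^n(X)/\Iur^{n+1}(X) \to \unram{I}^n/\unram{I}^{n+1}(X) \to H^1(X, \unram{I}^{n+1})' \to 0
$$
of \S2.2, surjectivity of $i_I^n$ is equivalent to vanishing of the subgroup $H^1(X, \unram{I}^{n+1})' \subseteq H^1(X, \unram{I}^{n+1})$. Since $X$ is smooth and $1$-dimensional, the Gersten--Witt complex is a length-$2$ flasque resolution of $\unram{I}^{n+1}$, yielding
$$
H^1(X, \unram{I}^{n+1}) \;=\; \mathrm{coker}\Bigl( I^{n+1}(F(X)) \xrightarrow{\partial} \bigoplus_{x \in X^{(1)}} I^n(F(x)) \Bigr).
$$

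The key numerical input is that $F(X)$ has transcendence degree $1$ over $F$ and each closed residue field $F(x)$ is algebraic over $F$, so $\cd_2(F(X)) \leq \cd_2(F) + 1$ and $\cd_2(F(x)) \leq \cd_2(F)$, with the analogous bounds for $\vcd$. Combined with the Milnor conjecture over fields and the Arason--Pfister Hauptsatz, this yields $I^m(L) = 0$ whenever $m > \cd_2(L)$. In case (1), $\cd_2(F) \leq 1$ forces both $I^{n+1}(F(X)) = 0$ and $I^n(F(x)) = 0$ for all $n \geq 2$, so the cokernel vanishes in that range; the small cases $n = 0, 1$ follow classically from rank mod $2$ and the signed discriminant, using Bloch--Ogus purity on the regular $1$-dimensional $X$. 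The same dimension argument in case (3) disposes of all $n \geq 3$.

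The content of cases (3) and (4) is thus concentrated at the critical degree $n = 2$, where neither $I^3(F(X))$ nor the residue terms $I^2(F(x))$ are forced to vanish. Here the affine hypothesis is essential: for a smooth affine curve, a prescribed residue $\alpha \in I^2(F(x))$ can be lifted to an element of $I^3(F(X))$ of the form $\Pform{\pi_x} \tensor \tilde\alpha$, where $\pi_x$ is a Dedekind uniformizer at $x$ (available since $X$ is affine) and $\tilde\alpha$ is a Pfister lift of $\alpha$ chosen to be unramified at the other finitely many closed points where the desired residue vanishes. The absence of global $\Pic$- and $\Brtwo$-obstructions on affine curves allows this construction to go through --- in contrast to projective curves, where precisely such obstructions produce the counterexamples over $p$-adic fields mentioned in the introduction. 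Thus $\partial$ is surjective on global sections in the critical degree, and surjectivity of $i_I^2$ follows. For cases (2) and (4), where $I^m(L)$ does not vanish above $\vcd_2(L)$ because orderings contribute torsion, the plan is to supplement the above by base-changing to $F(\sqrt{-1})$ (reducing to cases (1), (3) over the imaginary extension) and descending along $\mathrm{Gal}(F(\sqrt{-1})/F)$ while separately tracking total signatures at the real completions of $F(X)$.

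The main obstacle is this last step: simultaneously controlling the Gersten--Witt cokernel in the critical degree and matching signature data at orderings of residue fields of closed points, which need not extend orderings of $F$. This signature-theoretic refinement is precisely the content carried by Parimala--Sridharan's and Monnier's arguments via quaternion symbols and total signatures, and a concrete execution of my plan would follow their techniques closely.
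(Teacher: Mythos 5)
Your reduction of Question \ref{question}(1) on a curve to the vanishing of the cokernel of $I^{n+1}(F(X))\to\bigoplus_{x\in X^{(1)}}I^n(F(x))$, and the use of $I^m(L)=0$ for $m>\cd_2(L)$ to kill all degrees above the critical one, is sound and consistent with how these results are organized. The gap is exactly at the step you flag as the ``content'' of cases (3) and (4), the critical degree $n=2$, and your proposed construction there does not work as stated. On a smooth affine curve you cannot in general choose $\pi_x$ with divisor supported only at $x$ (the class group obstructs this), so $\Pform{\pi_x}\tensor\tilde\alpha$ acquires residues at the other zeros of $\pi_x$; requiring $\tilde\alpha$ to be merely \emph{unramified} at those points is not the right condition --- you would need its specialization to vanish in $I^2$ of each of those residue fields, and arranging this simultaneously with the prescribed specialization at $x$ is precisely the global lifting problem, the same one that genuinely obstructs the projective $p$-adic examples of Parimala--Scharlau--Sridharan. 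Tellingly, your argument at $n=2$ never uses $\cd_2(F)=2$: if it were correct it would give a positive answer in degree $2$ for affine curves over arbitrary fields, which is the question Parimala--Sridharan raise (and the paper records as open) in \cite[Rem.\ 4]{parimala_sridharan:graded_Witt}. The actual proof of case (3), \cite[Lemma 4.2]{parimala_sridharan:graded_Witt}, is cohomological rather than constructive: $\cd_2F(X)\leq 3$ gives $I^4(F(X))=0$, so $\unram{I}^3\isom\unram{I}^3/\unram{I}^4\isom\unram{H}_{\et}^3$, and on a curve the Bloch--Ogus spectral sequence embeds $\HZar^1(X,\unram{H}_{\et}^3)$ into $\Het^4(X,\muu_2)$, which vanishes by Artin's affine vanishing theorem since $\cd_2(X)\leq\dim X+\cd_2(F)=3$; this is where affineness and $\cd_2(F)=2$ both genuinely enter.

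For the formally real cases (2) and (4) you concede that the descent from $F(\sqrt{-1})$ and the matching of signature data ``would follow their techniques closely''; but that signature analysis (Monnier \cite[Cor.\ 3.2]{monnier:unramified}, and the real-closed generalizations of \cite[\S5, Thm.\ 6.1]{parimala_sridharan:graded_Witt}) \emph{is} the content of the theorem in those cases, since $I^m$ does not vanish above $\vcd$ and the orderings of the residue fields $F(x)$ need not restrict to orderings of $F$. The paper itself proves the theorem precisely by invoking those results, so deferring to them leaves nothing proved beyond the easy high-degree vanishing. As it stands, then, the proposal establishes case (1) (modulo the routine degree $\leq 1$ checks) but not cases (2)--(4).
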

\begin{proof}
For (1), this
% the case $\cd_2 F = 0$ (i.e.\ $F$ is quadratically closed)
% follows from a variant of Tsen's theorem, while the case $\cd_2 F = 1$
follows from Parimala--Sridharan \cite[Lemma
4.1]{parimala_sridharan:graded_Witt} and the fact that $e^1$ is always
surjective.  For (2), the case $\vcd(F)=0$ (i.e.\ $F$ is real closed)
is contained in Monnier \cite[Cor.\ 3.2]{monnier:unramified} and the
case $\vcd(F) = 1$ follows from a straightforward generalization to
real closed fields of the results in
\cite[\S5]{parimala_sridharan:graded_Witt} for the real numbers.  For
(3), see \cite[Lemma 4.2]{parimala_sridharan:graded_Witt}.  For (4),
the statement follows from a generalization of \cite[Thm.\
6.1]{parimala_sridharan:graded_Witt}.
\end{proof}

We wonder whether $\vcd$ can be replaced by $\vcd_2$ in Theorem
\ref{thm:curves}.  Parimala--Sridharan \cite[Rem.\
4]{parimala_sridharan:graded_Witt} ask whether there exist affine
curves (over a well-chosen field) over which the unramified Milnor
question has a negative answer.

For surfaces, there are positive results are in the case of $\vcd(F)
=0$.  If $F$ is algebraically closed, then the unramified Milnor
question for quadratic forms (Question \ref{question}(1)) has a
positive answer by a direct computation, see Fern\'andez-Carmena
\cite{fernandez-carmena:Witt_group_surfaces}.  If $F$ is real closed,
one has the following result.

\begin{theorem}[{Monnier \cite[Thm.\ 4.5]{monnier:unramified}}]
Let $X$ be smooth integral surface over a real closed field $F$.  If
the number of connected components of $X(F)$ is $\leq 1$ (i.e.\ in
particular if $X(F)=\emptyset$), then the unramified Milnor question
for quadratic forms (Question \ref{question}(1)) has a positive answer
over $X$.
\end{theorem}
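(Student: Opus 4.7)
The plan is to establish the surjectivity of the inclusion $i_I^{n} : I_{\ur}^n(X)/I_{\ur}^{n+1}(X) \hookrightarrow \unram{I}^n/\unram{I}^{n+1}(X)$ in every degree $n \geq 0$. Using the isomorphism $e_{\ur}^{n} : \unram{I}^n/\unram{I}^{n+1}(X) \isom \Hur^n(X,\muu_2^{\tensor n})$ recorded in the previous section (a consequence of the Milnor conjecture over fields), the task becomes: realize every unramified cohomology class as the $n$-th higher invariant of a symmetric bilinear form on $X$. Since $\dim X = 2$, Theorem~\ref{purity_witt} supplies the purity identification $W(X) \isom W_{\ur}(X)$, so all candidate forms may be sought globally on $X$.

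For the low degrees $n \leq 2$, I would argue directly with the classical invariants. The case $n = 0$ is tautological, with both sides $\Z/2\Z$ detected by the rank modulo~$2$. For $n = 1$, surjectivity of the signed discriminant $e^1 : I^1(X) \to \Het^1(X,\muu_2)$ is elementary: any class corresponding to a square-trivialized line bundle is realized as the discriminant of an explicit rank-$2$ form on $X$. For $n = 2$, the surjectivity of $e^2 : I^2(X) \to \Brtwo(X) = \Hur^2(X,\muu_2^{\tensor 2})$ on a smooth surface over a real closed field follows from the work of Parimala--Sujatha and is already implicit in the paper of Monnier cited above.

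The essential content lies in the range $n \geq 3$. Here I would invoke the structure theorem for unramified mod-$2$ cohomology over a real closed field (Cox, Colliot-Th\'el\`ene--Parimala, Scheiderer): for a smooth $d$-dimensional $F$-variety $X$ and $n > d$, restriction to real points induces an isomorphism
$$
\Hur^n(X,\muu_2^{\tensor n}) \isom (\Z/2\Z)^{s},
$$
where $s$ is the number of connected components of $X(F)$. Under the hypothesis $s \leq 1$ with $d = 2$, this group is either $0$ or $\Z/2\Z$. When $s = 0$ there is nothing to lift. When $s = 1$, the pullback to $X$ of the $n$-fold Pfister form $\Pform{-1,\dotsc,-1} \in I^n(F)$ lies in $I_{\ur}^n(X)$, and its image at the generic point under $e_{\ur}^n$ is the $n$-fold cup product $(-1) \cup \dotsm \cup (-1) \in H^n(F(X),\muu_2^{\tensor n})$, which is nonzero because $F(X)$ admits an ordering coming from any $F$-point of $X$. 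This unramified class must therefore generate $\Hur^n(X,\muu_2^{\tensor n}) \isom \Z/2\Z$, and the desired surjectivity follows.

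The principal obstacle is the structure theorem for high-degree unramified cohomology together with the compatibility ensuring that the Pfister form $\Pform{-1,\dotsc,-1}$ detects the real-point generator through $e_{\ur}^n$. This can be set up either through Scheiderer's real \'etale machinery, which controls $\Hur^n$ via real points in degree $> \dim X$, or by a direct residue analysis on the Gersten complex $C(X,H^n)$ compatible with the Pfister form map $s_{\ur}^n$. Once that compatibility is in place, the argument collapses to the very concrete Pfister form construction above.
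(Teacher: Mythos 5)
The paper itself offers no proof of this statement --- it is quoted from Monnier \cite[Thm.\ 4.5]{monnier:unramified} --- so your proposal can only be judged on its own terms. Its outer skeleton is reasonable: reducing Question \ref{question}(1) to surjectivity of $e_{\ur}^n\circ i_I^n$ in each degree, and in degrees $n\geq 3$ invoking the theorem of Colliot-Th\'el\`ene--Parimala \cite{colliot-thelene_parimala:real_components} (extended to real closed fields via semialgebraic components/Scheiderer) that $\Hur^n(X,\muu_2^{\tensor n})\isom(\Z/2\Z)^s$ for $n>\dim X=2$. Under $s\leq 1$ this group is $0$ or $\Z/2\Z$, and when $s=1$ the pullback of $\Pform{-1,\dotsc,-1}$ is indeed an unramified class whose image under $e_{\ur}^n$ is $(-1)\cup\dotsm\cup(-1)\neq 0$ in $H^n(F(X),\muu_2^{\tensor n})$, since $X$ smooth with $X(F)\neq\emptyset$ forces $F(X)$ to be formally real; degrees $0$ and $1$ are easy, as you say. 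That part of the argument is essentially correct.

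The genuine gap is degree $n=2$, which you dispatch by asserting that $e^2:\Iur^2(X)\to\Brtwo(X)$ is surjective ``by Parimala--Sujatha, implicit in Monnier.'' The Parimala--Sujatha work in this circle of ideas concerns curves (conics, elliptic and hyperelliptic curves), not surfaces, and ``implicit in Monnier'' is circular, since Monnier's Theorem 4.5 is precisely the statement being proved. Degree $2$ is the substantive case: it is exactly the degree in which the analogous question fails for smooth $p$-adic curves (Parimala--Scharlau--Sridharan, as discussed in \S\ref{sec:Counterexample_to_a_global_Milnor_conjecture}), and for a real surface $\Brtwo(X)=\Hur^2(X,\muu_2)$ contains classes tied to the real locus and to $H^1(G,\Pic\bar X)$ that are not pulled back from $F$, so the constant Pfister-form construction cannot reach them. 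Note also that your degree-$2$ step makes no use of the hypothesis $s\leq 1$, whereas Monnier's actual criterion (quoted later in the paper: the answer is positive if and only if the cokernel of the mod 2 signature homomorphism is 4-torsion) shows that for real surfaces the answer is governed by a condition on the real components; either you must prove degree-$2$ surjectivity for all real surfaces (a nontrivial claim you have not substantiated), or you must use $s\leq 1$ there, e.g.\ by relating realizable Clifford invariants to realizable signature patterns as Monnier does. As written, the crucial step is outsourced to an inapposite citation, so the proposal is incomplete.
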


Examples of surfaces with many connected components over a real closed
field, and over which the unramified Milnor question still has a
positive answer, are also given in Monnier \cite{monnier:unramified}.

Finally, as a consequence of \cite[Cor.\ 3.4]{auel:surjectivity}, the
unramified Milnor question for quadratic forms (Question
\ref{question}(1)) has a positive answer over any scheme $X$
satisfying: $\Brtwo(X)$ is generated by quaternion Azumaya algebras
(i.e.\ index$|$period for 2-torsion classes); or $\Brtwo(X)$ is
generated by Azumaya algebras of degree dividing 4 (i.e.\
index$|$period$^2$ for 2-torsion classes) and $\Pic(X)$ is
2-divisible.  In particular, this recovers the known cases of curves over
finite fields (by class field theory) and surfaces over algebraically
closed fields (by de Jong \cite{dejong:surfaces}).

\section{Negative results}
\label{sec:Counterexample_to_a_global_Milnor_conjecture}

% Theorems \ref{thm:unramified_Milnor_conj} and
% \ref{thm:unramified_Milnor_conj_coherent} imply the existence
% of unramified higher cohomological invariants inducing a graded
% isomorphism $e_{\ur}^{\tot} : I_{\ur}^{\tot}/I_{\ur}^{\tot+1}(X) \to
% \Hur^{\tot}(X,\Z/\Z)$.  

Alex Hahn asked if there exists a ring $R$ over which the global
Merkurjev question (Question \ref{question:GW}) has a negative answer,
i.e.\ $e^2 : I^2(R) \to \Brtwo(R)$ is not surjective.  The results of
Parimala, Scharlau, and Sridharan \cite{parimala_scharlau:extension},
\cite{parimala_sridharan:graded_Witt},
\cite{parimala_sridharan:nonsurjectivity}, show that there exist
smooth complete curves $X$ (over $p$-adic fields $F$) over which the
unramified Milnor question (Question \ref{question}(1)) in degree 2
(and hence, by purity, the global Merkurjev question) has a negative
answer.
% such that
% $e_{\ur}^{2} : I_{\ur}^2(X) \to \Hur^2(X,\muu_2^{\tensor 2})$ is
% \emph{not surjective}.  
% Parimala--Sridharan \cite{parimala_sridharan:nonsurjectivity} finally
% use these examples to answer Alex Hahn's question.

\begin{remark}
\label{rem:wrong}
The assertion (in Gille \cite[\S10.7]{gille:graded_Gersten-Witt} and
Pardon \cite[\S5]{pardon:filtered}) that the unramified Milnor
question (Question \ref{question}(1)) has a positive answer over any
smooth scheme (over a field of characteristic $\neq 2$) is incorrect.
In these texts, the distinction between the groups
$\Iur^n(X)/\Iur^{n+1}(X)$ and $\unram{I}^n/\unram{I}^{n+1}(X)$ is not
made clear.
\end{remark}

% In this section we shall explain this seeming paradox.  While the
% groups $I_{\ur}^n/I_{\ur}^{n+1}(X)$ are canonically defined, the
% existence of $e_{\ur,\tot}$ guarenteed by Gille
% \cite[\S10.7]{gille:graded_Gersten-Witt} depends implicitly on the
% choice of an invertible $\OO_X$-module, and the results of Parimala,
% Scharlau, Sridharan show that this choice is indeed important.

\begin{defin}[{Parimala--Sridharan \cite{parimala_sridharan:graded_Witt}}]
A scheme $X$ over a field $F$ has the \linedef{extension property} for
quadratic forms if there exists $x_0 \in X(F)$ such that every regular
quadratic form on $X \setminus \{x_0\}$ extends to $X$.
\end{defin}

% \begin{prop}[{Parimala--Sridharan \cite[Lemma 4.2, 4.3]{parimala_sridharan:graded_Witt}}]
% Let $F$ be a field of characteristic $\neq 2$ and with $\cd_2 F \leq
% n$ and $X$ a smooth integral $k$-curve.  Then:
% \begin{enumerate}
% \item  If $X$ is affine then the
% unramified Milnor conjecture holds for $e_{\ur}^{m}$ with $m \geq n$.
% % $e^n :
% % I_{\ur}^n(X) \to \Hur^n(X,\muu_2^{\tensor n})$ is surjective.
% \end{enumerate}
% \end{prop}

\begin{prop}[{Parimala--Sridharan \cite[Lemma 4.3]{parimala_sridharan:graded_Witt}}]
Let $F$ be a field of characteristic $\neq 2$ and with $\cd_2 F \leq
2$ and $X$ a smooth integral $F$-curve.  Then the unramified Milnor
question for quadratic forms (Question \ref{question}(1)) has a
positive answer for $X$ if and only if $X$ has the extension property.
\end{prop}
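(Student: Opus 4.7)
The plan is to reduce Question \ref{question}(1) to a single Brauer group surjectivity statement in degree $n=2$ via the cohomological dimension bound, and then to match that statement with the extension property using purity and the affine case from Theorem \ref{thm:curves}(3).

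\medskip

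\textbf{Reduction to $n=2$.} With $\cd_2(F)\le 2$ and $\dim X=1$, one has $\cd_2(F(X))\le 3$ and $\cd_2(F(x))\le 2$ for every $x\in X^{(1)}$. The Milnor conjecture over fields forces $I^n(F(X))=0$ for $n\ge 4$ and $I^3(F(x))=0$ for every $x$, so $I_{\ur}^n(X)=\unram{I}^n(X)=0$ and $i_I^n$ is trivial for $n\ge 4$. In degree $n=3$ the Gersten differential from $\unram{I}^3/\unram{I}^4=\unram{I}^3$ lands in $\bigoplus_{x}I^2/I^3(F(x))=\bigoplus_x I^2(F(x))$, and its kernel is precisely $I_{\ur}^3(X)$ since the ordinary Witt residue of an $I^3$-form already lies in $I^2(F(x))$; thus $i_I^3$ is an equality. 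Degrees $n=0,1$ are routine via the rank modulo $2$ and the signed discriminant, with surjectivity of $e^1$ onto $\Het^1(X,\muu_2)$ following from rank-one and rank-two constructions together with the surjection $\Pic(X)/2\twoheadrightarrow\Pic(U)/2$ for $U=X\setminus\{x_0\}$. The whole question therefore reduces to the surjectivity of $i_I^2$.

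\medskip

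\textbf{Translation via the Clifford invariant and forward direction.} Combining the Milnor conjecture over $F(X)$ with Theorem \ref{purity_witt} (purity for the Witt group on smooth curves) and the Bloch--Ogus--Gabber identification $\Brtwo(X)\isom\Hur^2(X,\muu_2^{\tensor 2})$, the Clifford invariant identifies $\unram{I}^2/\unram{I}^3(X)$ with $\Brtwo(X)$ and embeds $I_{\ur}^2(X)/I_{\ur}^3(X)$ into $\Brtwo(X)$ as the subgroup of classes of the form $c(q)$ for $q\in I^2(F(X))\cap W(X)$. Thus $i_I^2$ is surjective if and only if every $\alpha\in\Brtwo(X)$ is the Clifford invariant of some regular $I^2$-form on $X$. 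For the forward direction, assume the extension property holds at $x_0\in X(F)$: given $\alpha\in\Brtwo(X)$, restrict to $U=X\setminus\{x_0\}$, apply Theorem \ref{thm:curves}(3) to obtain $q\in I^2(U)$ with $c(q)=\alpha|_U$, and extend $q$ to a regular $I^2$-form $\tilde q$ on $X$. Purity of the Brauer group on the regular one-dimensional scheme $X$ gives $\Brtwo(X)\hookrightarrow\Brtwo(U)$, forcing $c(\tilde q)=\alpha$.

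\medskip

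\textbf{Backward direction and the main obstacle.} Conversely, assuming surjectivity of $e^2: I^2(X)/I^3(X)\to \Brtwo(X)$, given a regular form $q$ on $U=X\setminus\{x_0\}$ I would first strip off the rank and discriminant parts, which extend across $x_0$ since $\Pic(X)/2\twoheadrightarrow\Pic(U)/2$, reducing to the case $q\in I^2(U)$. The hypothesis then yields a global form $\tilde q\in I_{\ur}^2(X)$ whose Clifford class lifts $c(q)$ to $\Brtwo(X)$; the difference $q-\tilde q|_U$ lies in $I^3(U)$, and since $I^3(F(x_0))=0$ by $\cd_2(F(x_0))\le 2$, this difference has trivial residue at $x_0$ and itself extends to $X$, so $q$ extends as well. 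The hardest step will be verifying that $c(q)\in\Brtwo(U)$ genuinely lifts to $\Brtwo(X)$ --- equivalently, that its residue at $x_0$ in $H^1(F(x_0),\Z/2)$ vanishes; this is where the rationality of $x_0$ and the low cohomological dimension of $F$ must cooperate, and where the content of the Parimala--Sridharan argument lies. The existence of a rational $x_0$ is essential: if $X(F)=\emptyset$ the extension property fails vacuously, and the hypothesis on $e^2$ is expected to fail in parallel, keeping the equivalence of the proposition consistent.
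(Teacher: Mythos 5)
The paper itself gives no proof of this proposition (it is quoted from Parimala--Sridharan), so your attempt must stand on its own. Its first half does stand: the reduction to degree $2$ via $\cd_2 F(x)\leq 2$ and $\cd_2 F(X)\leq 3$ (so $I^3(F(x))=0$, $I^4(F(X))=0$), the identification --- using purity for the Witt group of a curve (Theorem \ref{purity_witt}(1)) and the Bloch--Ogus isomorphism $\unram{I}^2/\unram{I}^3(X)\isom\Brtwo(X)$ --- of Question \ref{question}(1) with surjectivity of $e^2:I^2(X)\to\Brtwo(X)$, and the implication ``extension property $\Rightarrow$ surjectivity'' by representing $\alpha|_U$ on the affine curve $U=X\setminus\{x_0\}$ (Theorem \ref{thm:curves}(1) and (3)), extending the representing form, and using injectivity of $H^1(X,\muu_2)\to H^1(U,\muu_2)$ and of $\Brtwo(X)\to\Brtwo(U)$: this is correct and is the expected route.

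The converse is where the content of the proposition lies, and you have not proved it. You explicitly defer ``the hardest step'' (that $c(q)\in\Brtwo(U)$ has trivial residue at $x_0$, hence lifts to $\Brtwo(X)$), and the auxiliary steps you do supply are flawed: the claim that $q-\tilde q|_U\in I^3(U)$ extends across $x_0$ ``since $I^3(F(x_0))=0$'' is a non sequitur, because the second residue of an $I^3$-class lies in $I^2(F(x_0))$, which is nonzero for, e.g., $F$ $p$-adic, so Arason--Pfister gives nothing; you never produce the rational point $x_0$ from the hypothesis (``expected to fail in parallel'' is not an argument); and throughout you conflate extension of Witt classes with extension of regular forms, which is what the extension property actually demands. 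More seriously, the deferred step is false without completeness of $X$, which your sketch never uses: for $X=\A^1$ over a $p$-adic field, Question \ref{question}(1) has a positive answer (Theorem \ref{thm:curves}(3)), yet $\qform{t-x_0}$ on $X\setminus\{x_0\}$ does not extend and the quaternion class $(t-x_0,u)$, $u\notin F^{\times2}$, has nontrivial residue at $x_0$. So the proposition must be read, as in Parimala--Sridharan, for complete curves, and any proof of the converse has to exploit properness --- in their work this enters through the residue theorem for $\omega_{X/F}$-valued forms and the canonical class (cf.\ Theorem \ref{thm:nonsurjectivity} and the Geyer--Harder--Knebusch--Scharlau sequence), which is exactly the ingredient missing from your outline.
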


The extension property is guaranteed when a residue theorem holds for
the Witt group.  The reside theorem for $X=\P^1$ is due to Milnor
\cite[\S5]{milnor:conjecture}.  For nonrational curves, the choice of
local uniformizers inherent in defining the residue maps is eliminated
by considering quadratic forms with values in the canonical bundle
$\omega_{X/F}$.

\begin{defin}
Let $X$ be a scheme and $\LL$ an invertible $\OO_X$-module.  An
\linedef{($\LL$-valued) symmetric bilinear form} on $X$ is a triple
$(\EE,b,\LL)$, where $\EE$ is a locally free $\OO_X$-module of finite
rank and $b : S^2\EE \to \LL$ is an $\OO_X$-module morphism.
\end{defin}

\begin{theorem}[{Geyer--Harder--Knebusch--Scharlau \cite{geyer_harder_knebusch_scharlau}}]
Let $X$ be a smooth proper integral curve over a field $F$ of
characteristic $\neq 2$. Then there is a canonical complex (which is
exact at the first two terms)
$$
\def\objectstyle{\scriptstyle} \def\labelstyle{\scriptstyle}
\xymatrix@C=18pt{
0 \ar[r] & 
W(X,\omega_{X/F}) \ar[r] &
W(F(X),\omega_{F(X)/F}) \ar[r]^(.45){\res^{\omega_X}} &
\smash{\underset{{x \in X^{(1)}}}{\textstyle\bigoplus}}
W(F(x),\omega_{F(x)/F}) \ar[r]^(.72){\Tr_{X/F}} &
 W(F)
}
$$
% $$
% 0 \to W(X,\omega_{X/F}) \to W(F(X),\omega_{F(X)/F})
% \mapto{\res^{\omega}} \bigoplus_{x \in X^{(1)}}
% W(F(x),\omega_{F(x)/F}) \mapto{\Tr_{X/F}} W(F)
% $$
and thus in particular $W(X,\omega_{X/F})$ has a residue theorem.
\end{theorem}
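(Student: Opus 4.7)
The plan is to assemble the sequence by constructing the residues $\res^{\omega_X}$ and transfer $\Tr_{X/F}$ separately, checking that the composition vanishes via a reciprocity argument, and finally establishing exactness at the first two positions using the regularity of $X$. The entire point of using $\omega_{X/F}$-valued forms (rather than $\OO_X$-valued forms) is to obtain \emph{canonical} residue maps, so that the resulting complex is intrinsic and the transfer to $W(F)$ is well-defined without choices of local uniformizers or trace forms.

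First I would construct $\res^{\omega_X}$ at a closed point $x \in X^{(1)}$. The local ring $\OO_{X,x}$ is a DVR with uniformizer $t$ and residue field $F(x)$, and $\omega_{X/F,x} \isom \OO_{X,x}\, dt$. The Poincar\'e (Tate) residue yields a canonical $F(x)$-linear isomorphism $t\inv \omega_{X/F,x}/\omega_{X/F,x} \isom \omega_{F(x)/F}$ (or simply $F(x)$ in the separable case), independent of the choice of $t$. Given an $\omega_{F(X)/F}$-valued regular form over $F(X)$, diagonalize it over $\OO_{X,x}$, split the diagonal entries into pieces whose $\omega$-valuation is even (non-residual) or odd (residual), and define $\res^{\omega_X}$ by taking the residue of the odd part via the Poincar\'e isomorphism, following Milnor's prescription for the tame symbol in the twisted setting. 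The transfer $\Tr_{X/F}$ is induced termwise by Scharlau transfer along the finite extensions $F(x)/F$: the canonical pairing $\omega_{F(x)/F} \tensor F(x) \to F$ (evaluation) converts an $\omega_{F(x)/F}$-valued form on $F(x)$ into an $F$-valued form on the underlying $F$-vector space of $F(x)$.

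To prove the complex property $\Tr_{X/F} \circ \res^{\omega_X} = 0$, reduce to diagonal rank-one forms $\qform{\alpha}$ for $\alpha \in \omega_{F(X)/F}\mult$. The computation of $\Tr_{X/F}\res^{\omega_X}\qform{\alpha}$ reduces, at the level of underlying classes in the Witt group of $F$, to a sum of classes determined by the residues $\mathrm{res}_x(\alpha)$ at the closed points. This sum vanishes precisely by the classical residue theorem for rational differentials on a proper smooth curve, $\sum_{x\in X^{(1)}} \Tr_{F(x)/F}\mathrm{res}_x(\alpha) = 0$. Properness of $X$ enters exactly here and nowhere else in a serious way.

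Finally, exactness at the first two terms. Injectivity of $W(X,\omega_{X/F}) \to W(F(X),\omega_{F(X)/F})$ follows from purity for the Witt group on regular one-dimensional schemes (Theorem~\ref{purity_witt}(1)), which applies equally well to $\omega$-valued forms via the local-to-global resolution by the Gersten--Witt complex. For exactness in the middle, suppose a class in $W(F(X),\omega_{F(X)/F})$ has all residues zero. At each $x \in X^{(1)}$, vanishing of $\res^{\omega_X}$ means the form admits a representative over $\OO_{X,x}$ whose diagonalization has all entries generating $\omega_{X/F,x}$; thus the class extends to $W(\OO_{X,x},\omega_{X/F,x})$. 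Gluing these local extensions along the generic point uses that $X$ is one-dimensional and regular, so a class that is simultaneously in the image of every $W(\OO_{X,x},\omega_{X/F,x})$ lifts to $W(X,\omega_{X/F})$ by the exactness of the corresponding Gersten--Witt complex in degree one. The main obstacle is the reciprocity step, since it is what genuinely ties the local construction at each point into a global statement; all other steps are essentially local and follow from the DVR analysis and regularity.
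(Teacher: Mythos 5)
The paper itself gives no proof of this statement (it is quoted from Geyer--Harder--Knebusch--Scharlau), so your proposal can only be measured against the known arguments. Your overall framework is the right one: using $\omega_{X/F}$-valued forms to get residue maps free of uniformizer choices, and a canonical transfer to $W(F)$ via the evaluation pairing (note that for this to make sense $\omega_{F(x)/F}$ must be read as the dualizing module $\Hom_F(F(x),F)$ of the finite extension, since the K\"ahler differentials of a separable extension vanish; your evaluation-pairing transfer implicitly does exactly this). The local construction of $\res^{\omega_X}$ and the exactness at the first two terms (injectivity over a regular curve, and gluing of lattices over a Dedekind scheme for a class with all residues zero) are fine in outline.

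The genuine gap is the step you yourself single out as the crux: the claim that $\Tr_{X/F}\circ\res^{\omega_X}=0$ reduces, via rank-one forms $\qform{\alpha}$, to the classical residue theorem $\sum_x \Tr_{F(x)/F}\mathrm{res}_x(\alpha)=0$. The second (Witt-theoretic) residue of $\qform{\alpha}$ at $x$ depends on the parity of the valuation of $\alpha$ and on its leading unit modulo squares, not on the additive residue $\mathrm{res}_x(\alpha)$. For example, $\alpha=t\,dt$ on $\P^1$ has additive residue zero at every point, yet has nonzero Witt residues at $0$ and $\infty$, and the vanishing of the transferred sum comes from the cancellation $\qform{1}+\qform{-1}=0$ in $W(F)$, which the differential residue theorem cannot see; conversely a form with nonzero additive residues can have all Witt residues trivial. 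So the additive residue theorem neither implies nor computes the reciprocity you need. The actual proofs are genuinely quadratic-form-theoretic: Geyer--Harder--Knebusch--Scharlau extend the form to a lattice $\EE\subset\EE^{\#}$ with $b(\EE,\EE)\subset\omega_{X/F}$, identify the residues with the induced form on the torsion quotient $\EE^{\#}/\EE$, and use Riemann--Roch and Serre duality for $\omega_{X/F}$ to produce a Lagrangian showing the total transferred residue is metabolic; alternatively one reduces to $\P^1$ by a finite morphism, using Milnor's computation of $W(F(t))$ together with the compatibility of the Scharlau transfer with second residues. Properness does enter only at this step, as you say, but through Serre duality (or the structure of $W(F(t))$), not through the residue theorem for differentials alone.
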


Now any choice of isomorphism $\vp: \NN^{\tensor 2} \isom
\omega_{X/F}$, induces a group isomorphism $W(X) \to
W(X,\omega_{X/F})$ via $(\EE,q) \mapsto (\EE\tensor\NN,\vp \circ
(q\tensor\id_{\NN}),\omega_{X/F})$.  Thus in particular, if
$\omega_{X/F}$ is a square in $\Pic(X)$, then $X$ has the extension
property.  Conversely:

\begin{theorem}[{Parimala--Sridharan \cite[Thm.\
3]{parimala_sridharan:nonsurjectivity}}] 
\label{thm:nonsurjectivity}
Let $F$ be a local field of characteristic $\neq 2$ and $X$ a smooth
integral hyperelliptic $F$-curve of genus $\geq 2$ with $X(F) \neq
\emptyset$.  Then the unramified Milnor question for quadratic forms
(Question \ref{question}(1)) holds over $X$
%$e_{\ur}^{2} : I_{\ur}^2(X) \to \Hur^2(X,\muu_2^{\tensor 2})$ is surjective 
if and only if $\omega_{X/k}$ is a square.
\end{theorem}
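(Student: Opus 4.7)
The statement is a biconditional, and the two directions have very different characters: the ``if'' direction is a straightforward synthesis of the results immediately preceding the theorem, while the substantive content of the theorem is in the ``only if'' direction.

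For the ``if'' direction, assume $\omega_{X/F} \isom \NN^{\tensor 2}$ for some $\NN \in \Pic(X)$ and fix an isomorphism $\vp : \NN^{\tensor 2} \isom \omega_{X/F}$. The induced group isomorphism $W(X) \isom W(X,\omega_{X/F})$ described just before the theorem transports the Geyer--Harder--Knebusch--Scharlau residue theorem for $\omega_{X/F}$-valued forms into a residue theorem for $W(X)$ itself, which immediately gives $X$ the extension property: every regular quadratic form on $X \setminus \{x_0\}$ extends across $x_0$. Since $F$ is a local field of characteristic $\neq 2$, we have $\cd_2(F) \leq 2$, so the Parimala--Sridharan proposition stated above converts the extension property into a positive answer to the unramified Milnor question for quadratic forms over $X$.

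For the ``only if'' direction, I argue the contrapositive: supposing $\omega_{X/F}$ is not a square in $\Pic(X)$, construct a regular quadratic form on $X \setminus \{x_0\}$ that does not extend to $X$; the same proposition then produces a negative answer to the unramified Milnor question. The plan is to begin with an $\omega_{X/F}$-valued form $(\EE, b, \omega_{X/F})$ on $X$ whose class in $W(X,\omega_{X/F})$ is forced by the residue theorem to be nontrivial in a controlled way, trivialize $\omega_{X/F}$ over the affine open complement $X \setminus \{x_0\}$ (where $\Pic$ is more flexible) to obtain an ordinary quadratic form there, and then argue that the resulting form cannot be extended across $x_0$ without a global square root of $\omega_{X/F}$. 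The obstruction to such an extension should be tracked through the Kummer exact sequence, identifying $[\omega_{X/F}] \in \Pic(X)/2$ with a class in $\Brtwo(X)$ that measures the failure of descent.

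The main obstacle is showing that this obstruction in $\Brtwo(X)$ really is nonzero when $\omega_{X/F}$ is not a square, and this is where the hyperelliptic, genus $\geq 2$, and local field hypotheses all work together. The hyperelliptic double cover $\pi : X \to \P^1$ lets one identify $\omega_{X/F} \isom \pi^{*}\OO_{\P^1}(g-1)$ and pin down its image in $\Pic(X)/2$ explicitly; the genus $\geq 2$ hypothesis ensures that this image can be nontrivial (the elliptic case $g=1$ is excluded because $\omega_{X/F}$ is then automatically trivial, hence a square); and the local field hypothesis, via $\Br(F) \isom \Q/\Z$ from local class field theory, supplies enough nontrivial Brauer classes that the obstruction can be detected after pullback along a well-chosen $F$-point of $X$. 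Producing this detection, rather than merely exhibiting a candidate obstruction, is the delicate arithmetic heart of the theorem and the step I expect to be the hardest to make rigorous.
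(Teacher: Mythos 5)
Your ``if'' direction is fine and is exactly the route the paper has in mind: a choice of $\NN$ with $\NN^{\tensor 2}\isom\omega_{X/F}$ transports the Geyer--Harder--Knebusch--Scharlau residue theorem to $W(X)$, the transfer at a rational point kills the residue there, purity in dimension one gives the extension property, and the Parimala--Sridharan proposition (applicable since $\cd_2(F)\leq 2$ for a local field) converts this into a positive answer to Question \ref{question}(1). Be aware, though, that the paper itself does not prove the theorem at all: it is quoted from Parimala--Sridharan, and the entire content of that citation is the converse direction, which is precisely the part your proposal leaves open.

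That is the genuine gap. For the ``only if'' direction you give a plan, not a proof: you never establish that when $\omega_{X/F}$ is not a square there actually exists an unramified class (equivalently, a regular form on a punctured curve that fails to extend) not coming from $W(X)$, and you yourself flag the detection step as unfinished. Two concrete problems with the sketch as it stands: first, negating the extension property requires a non-extending form for \emph{every} $x_0\in X(F)$, not for one chosen puncture, so the quantifier must be handled (or the construction shown to be uniform in $x_0$); second, the proposed obstruction --- the image of $[\omega_{X/F}]\in\Pic(X)/2$ in $\Brtwo(X)$ under the Kummer coboundary --- is only a candidate, and showing it genuinely obstructs representability by forms on $X$ is not a formal consequence of the Kummer sequence. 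The actual argument of Parimala--Sridharan rests on the Parimala--Scharlau analysis of the canonical class and the extension property together with explicit Witt-group computations for hyperelliptic curves over local fields (where the hyperelliptic, genus $\geq 2$, and $X(F)\neq\emptyset$ hypotheses enter in an essential, quantitative way), none of which is reproduced or replaced by your outline. As written, the hard half of the biconditional remains unproved.
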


\begin{example}[{Parimala--Sridharan
\cite[Rem.\ 3]{parimala_sridharan:nonsurjectivity}}]
Let $X$ be the smooth proper hyperelliptic curve over $\Q_3$ with
affine model $y^2 = (x^2-3)(x^4+x^3+x^2+x+1)$.  One can show using
\cite[Thm.\ 2.4]{parimala_scharlau:extension} that $\omega_{X/F}$ is
not a square.  The point $(y,x)=(\sqrt{31},2)$ defines a
$\Q_3$-rational point of $X$.  Hence by Theorem
\ref{thm:nonsurjectivity}, the unramified Milnor question has a
negative answer over $X$.
\end{example}

Note that possible counter examples which are surfaces could be
extracted from the following result.   

\begin{theorem}[{Monnier \cite[Thm.\ 4.5]{monnier:unramified}}]
Let $X$ be a smooth integral surface over a real closed field $F$.
Then the unramified Milnor question for quadratic forms (Question
\ref{question}(1)) has a positive answer over $X$ if and only if the
cokernel of the mod 2 signature homomorphism is 4-torsion.
\end{theorem}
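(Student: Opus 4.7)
The plan is to reduce the unramified Milnor question for $X$ to a cohomological statement about the real locus $X(F)$, a compact semi-algebraic manifold of real dimension at most $2$, and then show that the hypothesis on the cokernel of the mod $2$ signature is exactly what is needed for surjectivity in each degree.

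First I would set up the total signature $\sigma \colon W(X) \to C(X(F),\Z)$ into locally constant $\Z$-valued functions on $X(F)$, obtained by taking at each real point the signature of the fiber, and let $\bar\sigma$ denote its mod $2$ reduction. Using purity of the global Witt functor on smooth surfaces (Theorem \ref{purity_witt}(1)), one identifies $W(X) = W_{\ur}(X)$, and the Gersten complex for $I^n/I^{n+1}$ combined with the Milnor conjecture over fields yields $\unram{I}^n/\unram{I}^{n+1}(X) \isom \Hur^n(X,\muu_2^{\tensor n})$. Thus Question \ref{question}(1) becomes the surjectivity of $e_{\ur}^n \colon I_{\ur}^n(X)/I_{\ur}^{n+1}(X) \to \Hur^n(X,\muu_2^{\tensor n})$ in every degree $n$. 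Degrees $n \leq 2$ are handled by the classical invariants: the rank gives $n=0$, the signed discriminant gives $n=1$, and for $n=2$ one combines the Clifford invariant with the description of $\Brtwo(X)$ for a real surface via quaternion algebras generated by sign classes, as in Section \ref{subsec:Results}.

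For $n \geq 3$ I would invoke Scheiderer's real \'etale comparison theorem, which identifies $\Hur^n(X,\Z/2\Z)$ with the singular cohomology $H^n(X(F),\Z/2\Z)$ of the real locus for $n$ strictly greater than the real dimension; since $\dim X = 2$, this holds for all $n \geq 3$. The mod $2$ cohomology ring of the compact manifold $X(F)$ is generated in degrees $\leq 2$, and the Pfister-form map translates under this identification into cup products of degree-one classes recording sign behaviour along real components. Consequently the image of $e_{\ur}^n$ is the subring generated by the image of $\bar\sigma$ under the natural pairing between signatures and cohomology of $X(F)$, so the high-degree Milnor question reduces to whether $\bar\sigma$ captures enough of $H^{\tot}(X(F),\Z/2\Z)$.

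The final and most delicate step is the equivalence: $\mathrm{coker}(\bar\sigma)$ is $4$-torsion if and only if $\bar\sigma$ generates the required degree-one cohomology. One direction uses that $4$-torsion in the cokernel means every locally constant $\Z/2\Z$-valued function on $X(F)$ lifts, after doubling, to a signature in $\Z$, which in turn produces a quadratic form whose Stiefel--Whitney-type classes exhaust $H^{\tot}(X(F),\Z/2\Z)$. For the converse, given a failure of the $4$-torsion condition, one exhibits a configuration of components of $X(F)$ on which the signature mod $4$ is not realizable, producing an explicit class in $\Hur^n(X,\Z/2\Z)$ for some $n$ that is outside the image of any $n$-fold Pfister form. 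This last translation is the technical heart of the argument, and rests on a careful interplay between Scheiderer's isomorphism, the fundamental filtration on $W(X)$ modulo $2$-primary torsion, and the fact that the signature lands in $2^n \cdot C(X(F),\Z)$ on $I^n(X)$.
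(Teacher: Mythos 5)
The paper does not actually prove this statement---it is quoted verbatim from Monnier---so your sketch has to stand on its own, and as written it has three concrete gaps. First, you dismiss degrees $n\leq 2$ as ``handled by the classical invariants,'' but surjectivity of $e^2_{\ur}\colon I_{\ur}^2(X)\to\Brtwo(X)=\Hur^2(X,\muu_2)$ is precisely the kind of statement that can fail (this is the content of the Parimala--Scharlau--Sridharan counterexamples for $p$-adic curves), and the positive criteria recalled in the paper require, e.g., that $\Brtwo(X)$ be generated by quaternion classes, which is not something you establish for a surface over a real closed field. In Monnier's theorem the $4$-torsion condition on the cokernel of the signature is exactly what governs these low degrees as well, so this step cannot be waved away. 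Second, your use of the real comparison theorem is misstated: for $n>\dim X=2$ the singular cohomology $H^n(X(F),\Z/2\Z)$ of the (at most) $2$-dimensional real locus vanishes, whereas the correct statement (Colliot-Th\'el\`ene--Parimala, Scheiderer) is that $\Hur^n(X,\Z/2\Z)\cong H^0(X(F),\Z/2\Z)\cong(\Z/2\Z)^s$, $s$ the number of semialgebraic connected components, for $n>\dim X$. Your subsequent argument (``the cohomology ring of the compact manifold $X(F)$ is generated in degrees $\leq 2$,'' realizing classes as cup products of degree-one sign classes) is built on this incorrect identification; note also that $X$ is not assumed proper, so $X(F)$ need not be compact.

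Third, the actual equivalence with the $4$-torsion condition---which is the entire content of the theorem---is only asserted. Your translation is even off by a factor of $2$: a cokernel killed by $4$ means $4f$ is a signature for every locally constant $f\colon X(F)\to\Z$, not that ``every $\Z/2\Z$-valued function lifts after doubling,'' which would be the $2$-torsion condition. Since the precise power of $2$ is what distinguishes a positive from a negative answer (the theorem says $4$, not $2$ or $8$), the argument must track exactly which multiples of characteristic functions of components are realized as signatures of elements of $W(X)=W_{\ur}(X)$ and how multiplying by Pfister forms such as $\Pform{-1,\dotsc,-1}$ moves them into $I_{\ur}^n(X)$ with the right class in $\Hur^n$; none of this bookkeeping appears in your sketch. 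So while the overall strategy (purity via Theorem \ref{purity_witt}, reduction of Question \ref{question}(1) to surjectivity of $e^n_{\ur}$, control of high degrees by the real locus and signatures) is the right framework, the proof as proposed does not go through.
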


% \subsection{The canonical Gersten--Witt complex}

% If $X$ is a regular scheme, then as pointed out in Balmer--Walter
% \cite{balmer_walter:Gersten-Witt}, the Gersten--Witt complex
% canonically arises as $C(X,W,\omega_X)$
% $$
% 0 \to \bigoplus_{\eta \in X^{(0)}} W(\eta,\omega_{\eta/X}) 
%   \mapto{\res} \bigoplus_{x \in X^{(1)}} W(x,\omega_{x/X})
%   \mapto{\res} \bigoplus_{y \in X^{(2)}} W(y,\omega_{y/X}) \to \dotsm 
% $$
% where $W(x,\omega_{x/X})$ is the Witt group of finite length
% $\OO_{X,x}$-modules with duality $\Ext^p(-,\OO_{X,x})$ if $x \in
% X^{(p)}$ and the differentials arise as differentials in the
% Gersten--Wit complex.  It has a fundamental filtration
% $C(X,I^{\tot},\omega_X)$
% $$
% 0 \to \bigoplus_{\eta \in X^{(0)}} I^n(\eta,\omega_{\eta/X})
% \mapto{\res} \bigoplus_{x \in X^{(1)}} I^{n-1}(x,\omega_{x/X})
% \mapto{\res} \bigoplus_{y \in X^{(2)}} I^{n-2}(y,\omega_{y/X}) 
% \to \dotsm
% $$
% where $I^n(x,\omega_{x/X})$ is defined to be the image of
% $I^n(\resk(x))$ under the any choice of isomorphism $W(\resk(x)) \to
% W(x,\omega_{x/X})$, see Fasel \cite[\S9.2]{fasel:Chow-Witt}.
% Similarly, for any invertible $\OO_X$-module $\LL$, there's a complex
% $C(X,I^{\tot},\LL)$.

% \begin{lemma}[{Fasel \cite[Rem.\ 9.2.9]{fasel:Chow-Witt}}]
% For any invertible $\OO_X$-module $\LL$, the complexes
% $C(X,I^{\tot}/I^{\tot+1})$ and $C(X,I^{\tot}/I^{\tot+1},\LL)$ are
% canonically isomorphic. 
% \end{lemma}

% However, note that $C(X,I^{\tot})$ and $C(X,I^{\tot},\LL)$ may
% \emph{not} be isomorphic complexes!

\section{Line bundle-valued quadratic forms}
\label{sec:Counterexample_repaired}

Let $X$ be a smooth $F$-scheme.  Let $W(X,\LL)$ be the Witt group of
$\LL$-valued symmetric bilinear forms on $X$ and $W_{\total}(X) =
\bigoplus_{\LL} W(X,\LL)$ the \linedef{total} Witt group, where the
sum is taken over a set of representatives of $\Pic(X)/2$.  While this
group is only defined up to non-canonical isomorphism depending on our
choice of representatives, none of our cohomological invariants depend
on such isomorphisms, see \cite[\S1.2]{auel:surjectivity}.
Furthermore, we will not consider any ring structure on this group.
Thus we will not need to descend into most of the important
considerations of Balmer--Calm\`es \cite{balmer_calmes:lax}.

Let $e^0$ be the usual rank modulo 2 map
$$
e^0_{\total} : W_{\total}(X) \to \Z/2\Z = \Hur^0(X,\Z/2\Z)
$$
and $I^1_{\total}(X) = \oplus_{\LL} I^1(X,\LL)$ its kernel.  Then the
signed discriminant (see
\cite{parimala_sridharan:norms_and_pfaffians}) defines a surjective
map
$$
e^1_{\total} : I^1_{\total}(X) \to \Het^1(X,\muu_2) = \Hur^1(X,\muu_2).
$$
Finally, denote by $I^2(X,\LL) \subset I^1(X,\LL)$ the subgroup
generated by forms of trivial Arf invariant and $I_{\total}^2(X) =
\oplus_{\LL}I^2(X,\LL)$.  Then there exists a \linedef{total Clifford
invariant} for line bundle-valued quadratic forms
$$
e^2_{\total} : I^2_{\total}(X) \to \Brtwo(X) = \Hur^2(X,\muu_2)
$$ 
defined in \cite[Prop.\ 1.4]{auel:surjectivity}.  The surjectivity of
the total Clifford invariant can be viewed as a version of the global
Merkurjev question (Question \ref{question:GW}) for line bundle-valued
quadratic forms.  

\begin{theorem}[\cite{auel:surjectivity}]
Let $X$ be a smooth proper integral curve over a local field $F$ of
characteristic $\neq 2$ or a smooth proper integral surface
over a finite field $F$ of characteristic $\neq 2$.  Then the total
Clifford invariant
$$
e^{2}_{\total} : I_{\total}^2(X) \to \Brtwo(X)
$$
is surjective.
\end{theorem}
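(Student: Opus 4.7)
The plan is to combine a period–index bound for $\Brtwo(X)$ with a geometric realization result: show that every $2$-torsion Brauer class on $X$ arises as the total Clifford invariant of some line bundle–valued form in $I^2_{\total}(X)$, by representing it by an Azumaya algebra of degree dividing $4$ with an orthogonal involution, and then exhibiting that algebra as a (even) Clifford algebra of an appropriate $\LL$-valued form.

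First I would establish that every class $\alpha \in \Brtwo(X)$ is represented by an Azumaya $\OO_X$-algebra of degree dividing $4$. The generic fiber lives in $\Brtwo(F(X))$, and in both settings $\cd_2(F(X)) \leq 3$: for a curve over a local field one has $\cd_2(F) = 2$ and adds the dimension of $X$, while for a surface over a finite field one has $\cd_2(F) = 1$ and adds two. The relevant period–index bounds in cohomological $2$-dimension $\leq 3$ (Saltman for $p$-adic curves, Lieblich-type results for surfaces over finite fields) then give index dividing $4$ at the generic point, and purity for the Brauer group on the smooth scheme $X$ (using $\Brtwo(X) = {}_2 \Het^2(X,\Gm)$ via Gabber–de Jong) spreads this out to a global Azumaya representative. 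Since $\alpha$ has period $2$, one can equip the representing algebra with an involution of the first kind, and after adjustment, with one of orthogonal type.

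Next I would realize such algebras as Clifford algebras of line bundle–valued forms. Quaternion Azumaya algebras with their canonical involution are the Clifford algebras $\CliffAlg(\EE,q,\LL)$ of regular rank-$3$ line bundle–valued forms on $X$, by the scheme-theoretic version of the classical ternary form correspondence. For a degree-$4$ Azumaya algebra $(A,\sigma)$ with orthogonal involution, I would apply a globalized form of the exceptional isomorphism $D_3 \isom A_3$, as developed in Knus–Parimala–Sridharan, to reconstruct $(A,\sigma)$ as the even Clifford $\OO_X$-algebra of a rank-$6$ $\LL$-valued form $(\EE,q,\LL)$, where $\LL$ is dictated by the discriminant bundle of $(A,\sigma)$. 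Checking that the resulting form has trivial rank modulo $2$ and trivial Arf invariant places it in $I^2_{\total}(X)$, and $e^2_{\total}(\EE,q,\LL) = \alpha$ by construction, giving surjectivity.

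The main obstacle will be the degree-$4$ step of the realization: carrying out the $D_3 \isom A_3$ reconstruction globally, with an arbitrary twisting line bundle. Over a field, or on a scheme with $\Pic(X)/2 = 0$, this reduces to the classical setting and is what underlies the criteria already recorded in \S\ref{subsec:Results}; but in general the discriminant bundle of $(A,\sigma)$ need not be a square, which is precisely the obstruction lurking in the Parimala–Sridharan counterexamples. It is exactly the passage from $W(X)$ to $W_{\total}(X)$, summing over representatives of $\Pic(X)/2$, that supplies the required $\LL$-valued form and dissolves this obstruction — so the heart of the proof is verifying that the $\LL$-valued even Clifford construction behaves functorially enough to recover any degree-$4$ Azumaya algebra with orthogonal involution of prescribed discriminant bundle.
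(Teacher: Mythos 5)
Your outline --- a period--index bound (Saltman for curves over local fields, Lieblich for surfaces over finite fields) giving every $2$-torsion class an Azumaya representative of degree dividing $4$ on $X$, followed by a realization of that algebra through the even Clifford algebra of a rank-$6$ line bundle--valued form via the exceptional isomorphism $A_3\isom D_3$, with the passage to $W_{\total}(X)$ absorbing the discriminant/theta-characteristic obstruction --- is indeed the strategy behind the cited result. But one step, as you wrote it, fails over schemes: ``since $\alpha$ has period $2$, one can equip the representing algebra with an involution of the first kind, and after adjustment, with one of orthogonal type.'' That is Albert's theorem, a field-level fact resting on Wedderburn uniqueness. Over a scheme, $2[A]=0$ only says that $A$ and $A^{\mathrm{op}}$ are Brauer-equivalent Azumaya algebras of the same degree; they need not be isomorphic, and $A$ need not carry any involution of the first kind. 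Saltman's theorem only produces an involution on some Brauer-equivalent algebra of twice the degree, and there are genuine counterexamples for $A$ itself --- this failure is precisely why Parimala--Srinivas introduce the involutive Brauer group used elsewhere in this survey. Doubling the degree to $8$ destroys the $A_3\isom D_3$ step, so your route stalls exactly where you located the ``main obstacle,'' though the obstacle is the existence of the involution, not only the non-square discriminant bundle.

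The repair --- and the way the realization is actually carried out --- is to never choose an involution on $A$. For trivial discriminant the $A_3$-side datum is canonically $(A\times A^{\mathrm{op}})$ with the exchange involution, and the $D_3$-side output of the exceptional equivalence is the exterior-square (discriminant) algebra $\lambda^{2}A$ with its canonical quadratic pair. Since $[\lambda^{2}A]=2[A]=0$, the algebra $\lambda^{2}A$ is the endomorphism algebra of a locally free $\OO_X$-module $\EE$ of rank $6$, and a quadratic pair on such an endomorphism algebra is exactly a similarity class of $\LL$-valued quadratic forms on $\EE$ --- this is where line-bundle values enter canonically, rather than as an afterthought. The resulting $(\EE,q,\LL)$ has trivial Arf invariant, its even Clifford algebra has $A$ as a factor, and $e^{2}_{\total}(\EE,q,\LL)=[A]=\alpha$; quaternion classes are handled separately by ternary forms. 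One smaller imprecision: Gabber--de Jong only identifies $\Brtwo(X)$ with ${}_2\Het^2(X,\Gm)$ and gives \emph{some} Azumaya representative, with no control on its degree; to represent $\alpha$ by an algebra of degree dividing $4$ on all of $X$ you need, besides the function-field index bound, the fact that on a regular scheme of dimension $\leq 2$ a maximal order in an unramified central simple algebra over $F(X)$ is Azumaya of the same degree. That argument is available for the curves and surfaces in the statement, but purity alone is not the right citation.
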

% \begin{proof}[Sketch of proof]
% By the period--index results of Saltman
% \cite{saltman:division_algebra_p-adic_curves} for curves over local
% fields and Lieblich \cite{lieblich:transcendence_2} for surfaces over
% (quasi-)finite fields, together with purity results for division
% algebras on surfaces, we show that $\Brtwo(X)$ is generated by Azumaya
% algebras of degree dividing 4.  Using the reduced norm form and
% reduced pfaffian form functors arising from the accidental isomorphism
% of algebraic groups $\mathsf{A}_1^2=\mathsf{D}_2$ and
% $\mathsf{A}_3=\mathsc{D}_3$, we can construct line bundle-valued
% quadratic forms with prescribed total Clifford invariant.
% \end{proof}

The surjectivity of the total Clifford invariant can also be
reinterpreted as the statement that while not every class in
$\unram{I}^2/\unram{I}^3(X) = \Hur^2(X)$ is represented by a quadratic
form on $X$, every class is represented by a line bundle-valued
quadratic form on $X$.

% Thus in the case of $p$-adic curves, the unramified Milnor conjecture
% holds for the ``total globalization'' of the Witt group.

% \nocite{}
% \bibliographystyle{amsplain}
% \bibliography{masterbib}

%\end{document}

\providecommand{\bysame}{\leavevmode\hbox to3em{\hrulefill}\thinspace}
% \providecommand{\MR}{\relax\ifhmode\unskip\space\fi MR }
% % \MRhref is called by the amsart/book/proc definition of \MR.
% \providecommand{\MRhref}[2]{%
%   \href{http://www.ams.org/mathscinet-getitem?mr=#1}{#2}
% }
\providecommand{\href}[2]{#2}

\end{document}